\documentclass[10pt]{article}
\usepackage[a4paper,margin=1in]{geometry}

\usepackage{titlesec}
\usepackage{CJKutf8}
\usepackage{bm}
\usepackage{amssymb,version}
\usepackage{color}
\usepackage{verbatim}
\usepackage{fancyvrb}
\usepackage{mathtools}
\usepackage{enumerate}
\usepackage{lipsum}
\usepackage{multirow}

\usepackage{graphics}
\usepackage{epstopdf}
\usepackage{subcaption} 
\usepackage{subfig}
\usepackage{svg}
\usepackage{amsmath}
\usepackage{float}
\usepackage{placeins}

\allowdisplaybreaks[4]
\usepackage{mathrsfs}
\numberwithin{equation}{section}
\makeatletter
\let\oldref\ref
\renewcommand{\ref}[1]{\normalfont\oldref{#1}}
\renewcommand{\arraystretch}{1.3}
\makeatother

\newcommand{\norm}[1]{\lVert #1 \rVert}

\captionsetup[sub]{skip=2pt}
\usepackage{amsthm}
\newtheorem{lemma}{Lemma}[section]
\newtheorem{theorem}{Theorem}[section]
\newtheorem{corollary}{Corollary}[section]
\newtheorem{remark}{Remark}[section]
\setcounter{topnumber}{5}    
\setcounter{bottomnumber}{5}
\setcounter{totalnumber}{10} 
\allowdisplaybreaks
\makeatletter
\def\env@cases{%
	\let\@ifnextchar\new@ifnextchar
	\left\lbrace
	\def\arraystretch{1.5}  
	\array{@{}l@{\quad}l@{}}}
\makeatother

\begin{document}
	
	\title{THE MAC SCHEME FOR LINEAR ELASTICITY IN DISPLACEMENT-STRESS FORMULATION ON NON-UNIFORM STAGGERED GRIDS
	\thanks{This work is supported in part by the National Natural Science Foundation of China (Grant Nos.12131014).}}
	
	\author{ Hongxing Rui \thanks{Corresponding author,School of Mathematics, Shandong University, Jinan, Shandong, 250100, P.R. China. Email: hxrui@sdu.edu.cn}.\and Weijie Wang \thanks{Shandong University Mathematical Research Center, Shandong University, Jinan, Shandong, 250100, P.R. China. Email: 202420960@sdu.edu.cn}.}
	\maketitle
	\begin{abstract}
		A marker-and-cell (MAC) finite difference method is developed for solving the two dimensional and three dimensional linear elasticity in the displacement–stress formulation (MAC-E) on staggered grids.  The method employs a staggered grid arrangement, where the displacement components are approximated on the midpoints of cell edges, the normal stresses are defined at the cell centers, and the shear stresses are defined at the grid points.This structure ensures local conservation properties and avoids spurious oscillations in stress approximation. A rigorous mathematical analysis is presented, establishing the stability of the scheme and proving the second-order $L^2$-error superconvergence for both displacement and stress. The proposed method is locking-free with respect to the Lamé constant $\lambda\in (0,\infty)$, making it suitable for both compressible and nearly incompressible elastic materials. Numerical experiments demonstrate the efficiency and robustness of the MAC-E finite difference scheme, and the computed results show excellent agreement with the theoretical predictions.
	\end{abstract}
	 \noindent\textbf{Keywords:}
	 MAC scheme; linear elasticity equations; displacement--stress formulation;
	 superconvergence; locking-free.
	\section{Introduction}
	The analysis of the deformation of compressible and nearly incompressible linear elastic solids in the stress--displacement formulation has wide applications in many important fields. By solving linear elasticity problems to obtain stress and displacement, one can predict the stiffness and stability of structures, providing valuable guidance for engineering design, structural optimization, and safety evaluation.
	
	For several decades, the stress--displacement formulation has been one of the standard frameworks for linear elasticity, and the finite element method has been widely used within this setting. Nevertheless, constructing stable discretizations for this formulation remains nontrivial. In contrast to the mixed formulation of the Poisson problem (see, for instance, \cite{brezzi2012mixed}), the stress field in linear elasticity is additionally subject to a symmetry constraint, which significantly complicates the design of finite elements. A breakthrough was achieved by Arnold and Winther \cite{arnold2002mixed}, who proposed the first family of mixed finite elements based on polynomial shape functions for the stress variable. Since then, many stable mixed finite element (MFE) schemes have been developed (see \cite{adams2004mixed, arnold2008finite, arnold2003nonconforming, hu2008lower, man2009lower, arnold2007mixed, boffi2009reduced, cockburn2010new, hu2015family, hu2015new}).
	
	Because these elements rely on a local commuting property, they typically involve a large number of degrees of freedom, which makes their implementation rather involved. Moreover, when the material is nearly incompressible, the performance of many schemes deteriorates severely due to locking. Besides stability, a growing line of research focuses on preserving the intrinsic structure of the underlying PDEs. For example, in incompressible flow problems, recent work by Charnyi et al.\ \cite{charnyi2017conservation} demonstrates that different Galerkin formulations of the Navier--Stokes equations may lose important invariants—such as kinetic energy, momentum, angular momentum, helicity, and enstrophy—when the divergence constraint is imposed only weakly. They proposed the EMAC formulation, which restores these conservation properties at the discrete level. Such findings highlight the broader importance of designing structure-preserving discretizations, a principle that also motivates the development of alternative approaches in computational fluid mechanics and elasticity.
	
	In contrast to the structure-preserving approaches based on mixed finite element frameworks discussed above, the MAC (Marker-and-Cell) scheme, which is built on staggered grids, has long been widely used in computational fluid mechanics due to its conceptual simplicity, ease of implementation, and computational efficiency. The MAC scheme was originally proposed by Lebedev \cite{dong2023second} and Daly et al.\ \cite{welch1965mac} in the 1960s, and it has been extensively used in engineering practice and serves as the foundation for many flow simulation software packages \cite{nicolaides1992analysis}. The MAC scheme enforces the incompressibility constraint of the velocity field pointwise, ensuring that the divergence-free condition is satisfied at the discrete level. Moreover, it has been proven that the MAC scheme locally conserves mass, momentum, and kinetic energy \cite{perot2000conservation, perot2011discrete}. Until 2017, Rui and Li \cite{rui2017stability} established the second-order superconvergence of the velocity and pressure in the discrete $H^1$ and discrete $L^2$ norms, respectively, thereby resolving this problem. In \cite{rui2018locking}, a Poisson locking-free linear elasticity method based on the non-stress formulation was proposed.
	
	In this paper, we introduce the MAC scheme for the stress--displacement formulation of the linear elasticity equations on staggered non-uniform grids. In the proposed framework, the normal stresses are defined at the cell centers, the shear stresses are located at the grid points, and the displacements are defined at the midpoints of cell edges. Based on this arrangement, an efficient finite difference discretization method is constructed. The proposed method effectively resolves the locking phenomenon under strong symmetry conditions, preserves the local conservation property, and achieves stable and convergent results on both uniform and nonuniform grids. It thus provides a simple yet efficient alternative to traditional finite element methods. For the scheme, we first establish the LBB (Ladyzhenskaya--Babuška--Brezzi) condition and prove the stability. Subsequently, inspired by the analytical techniques in \cite{rui2017stability} for the steady Stokes equations, we prove the second order superconvergence properties. It is found that the convergence orders of the displacement and the tangential stress are independent of $\lambda$. Finally, several numerical experiments are conducted to verify the accuracy and convergence rates.
	
	The paper is organized as follows. In Section~2, we give the problem formulation and introduce some notations. In Section~3, we present the MAC-E scheme for linear elasticity, establish the LBB condition, and provide the stability results. In Section~4, we present the superconvergence analysis for the MAC-E scheme. In Section~5, we briefly explain how to extend the analysis to three-dimensional problems. In Section~6, several numerical experiments using the MAC-E scheme are carried out. Throughout the paper, we use $C$, with or without subscripts, to denote a generic positive constant, which may take different values at different occurrences.

	\section{The Problem and Some Preliminaries}
		Let $\Omega \subset R^d,d=2,3$ be a polygonal domain. We consider the following linear elasticity in displacement-stress formulation
		
		\begin{equation}
			\begin{dcases}
				\begin{aligned}
					A\underline{\sigma}&=\varepsilon(\boldsymbol{u})\ &in \,\ \Omega,\\
					\mathrm{div}\,\underline{\sigma}&=\boldsymbol{f} &in \,\ \Omega,
				\end{aligned}
			\end{dcases}
			\label{initial equations}
		\end{equation}
		with boundary condition 
			\begin{equation}
				u=0,on\; \partial \Omega.
			\end{equation}		
		Here A is the compliance tensor determined by material parameters of the elastic medium.
		In a homogeneous isotropic elastic medium, A has the form
		\begin{align*}
			A\underline{\sigma}=\frac{1}{2\mu}(\underline{\sigma}-\frac{\lambda}{d\lambda+2\mu}tr(\underline{\sigma})\underline{I}),
		\end{align*}
		$\lambda$ and $\mu$ are positive constant, called the Lame parameters, $tr(G)$ is the trace of function $G$, and $\underline{I}$ is the identity matrix.
		
			We consider the MAC scheme for the elasticity problem and called it MAC-E. The partitions and notations we use as follows.
			
			First, the two dimensional domain $\Omega$ is partitioned by $\mathcal{T}^1_h=\delta_x \times \delta_y$, where
		\[
		\begin{split}
			\delta_x &: 0 = x_0 < x_1 < \cdots < x_{n_x - 1} < x_{n_x} = a, \\
			\delta_y &: 0 = y_0 < y_1 < \cdots < y_{n_y - 1} < y_{n_y} = b.
		\end{split}
		\]		
		For possible integers $i, j$, $1 \leq i \leq n_x-1$, $1 \leq j \leq n_y-1$, define
		\begin{align*}
			&x_{i + 1/2} = \frac{x_i + x_{i + 1}}{2}, \quad h_{i + 1/2} = x_{i + 1} - x_i, \quad h = \max_i \{h_{i + 1/2}\}, \\
			&h_i = x_{i + 1/2} - x_{i - 1/2} = \frac{h_{i + 1/2} + h_{i - 1/2}}{2}, \\
			&y_{j + 1/2} = \frac{y_j + y_{j + 1}}{2}, \quad l_{j + 1/2} = y_{j + 1} - y_j, \quad l = \max_j \{l_{j + 1/2}\}, \\
			&l_j = y_{j + 1/2} - y_{j - 1/2} = \frac{l_{j + 1/2} + l_{j - 1/2}}{2}, \\
			&\Omega_{i + 1/2, j + 1/2} = (x_i, x_{i + 1}) \times (y_j, y_{j + 1}), \\
			&\Omega_{i,j} = (x_{i-1/2}, x_{i + 1/2}) \times (y_{j-1/2}, y_{j + 1/2}), 
		\end{align*}
		Furthermore, we define $h_0 = \frac{h_{1/2}}{2}$, $h_{n_x} = \frac{h_{n_x - 1/2}}{2}$, $l_0 = \frac{l_{1/2}}{2}$, $l_{n_y} = \frac{l_{n_y - 1/2}}{2}$. 
		
		We suppose the partition is regular, which means there is a positive constant $C_0$ such that
		\begin{align}
			\min_{i,j} \{h_{i + 1/2}, l_{j + 1/2}\} \geq C_0 \max_{i,j} \{h_{i + 1/2}, l_{j + 1/2}\}.\label{grid conditon}
		\end{align}
		
		For simplicity we also set $c_{l,m} = (x_l, y_m)$ where $l(m)$ may take values $i, i + 1/2(j, j+1/2)$ for integer $i(j)$. For a function $\phi(x, y)$, let $\phi_{l,m}$ denote $\phi(x_l, y_m)$. For discrete functions, we define
		\begin{equation}
			\begin{dcases}
				{[d_x \phi]}_{i + 1/2, m} = \frac{\phi_{i + 1, m} - \phi_{i, m}}{h_{i + 1/2}}, & {[D_y \phi]}_{l, j + 1} = \frac{\phi_{l, j + 1/2} - \phi_{l, j - 1/2}}{l_{j + 1}}, \\
				{[D_x \phi]}_{i, m} = \frac{\phi_{i + 1/2, m} - \phi_{i - 1/2, m}}{h_i}, & {[d_y \phi]}_{l, j + 1/2} = \frac{\phi_{l, j + 1} - \phi_{l, j}}{l_{j + 1/2}}.
			\end{dcases} 
			\label{chafenfuhao}
		\end{equation}
		On the boundary and interface of $\Omega$, define
		\begin{equation}
			\begin{dcases}
				{[D_x \phi]}_{0, m} = \frac{\phi_{1/2, m} - \phi_{0, m}}{h_0}, & {[D_x \phi]}_{n_x, m} = \frac{\phi_{n_x, m} - \phi_{n_x - 1/2, m}}{h_{n_x}}, \\
				{[D_y \phi]}_{l, 0} = \frac{\phi_{l, 1/2} - \phi_{l, 0}}{l_0}, & {[D_y \phi]}_{l, n_y} = \frac{\phi_{l, n_y} - \phi_{l, n_y - 1/2}}{l_{n_y}}.
			\end{dcases} 
		\end{equation}

		For the functions $\phi$ and $\theta$, we define some discrete $l^2$ inner products and norms as follows. 
		\begin{align*}
			&(\phi, \theta)_{ M} \equiv \sum_{i=0}^{n_x - 1} \sum_{j=0}^{n_y - 1} h_{i + 1/2} l_{j + 1/2} \phi_{i + 1/2, j + 1/2} \theta_{i + 1/2, j + 1/2},\\
			&(\phi, \theta)_{ T} \equiv \sum_{i=0}^{n_x} \sum_{j=0}^{n_y} h_i l_j \phi_{i, j} \theta_{i, j},\\
			&(\phi, \theta)_{TM} \equiv \sum_{i=1}^{n_x - 1} \sum_{j=0}^{n_y - 1} h_i l_{j + 1/2} \phi_{i, j + 1/2} \theta_{i, j + 1/2},\\
			&(\phi, \theta)_{MT} \equiv \sum_{i=0}^{n_x - 1} \sum_{j=1}^{n_y - 1} h_{i + 1/2} l_j \phi_{i + 1/2, j} \theta_{i + 1/2, j},\\
			&\norm{\phi}_\xi^2 \equiv (\phi, \phi)_\xi,\quad\xi = M,T,TM,MT.\\
			&\norm{\boldsymbol{u}}^2=\norm{u^x}^2_{TM}+\norm{u^y}^2_{MT},\\
			&\norm{\underline{\sigma}}^2=\norm{\sigma^{11}}_M^2+\norm{\sigma^{22}}_M^2+\norm{\sigma^{12}}_T^2.
		\end{align*}


		\section{The MAC-E scheme and stability}In this section we present the MAC-E scheme for the elasticity problem and give the LBB condition and stability.

		Rewrite the model problem $\eqref{initial equations}$ as
		\begin{align}
			\frac{1}{2\mu}(\sigma^{11}-\frac{\lambda}{2\lambda+2\mu}(\sigma^{11}+\sigma^{22}))=\frac{\partial u^x}{\partial x},(x,y)\in \Omega,\label{sigma11equation}\\
			\frac{1}{\mu}\sigma^{12}=\frac{1}{\mu}\sigma^{21}=\frac{\partial u^x}{\partial y}+\frac{\partial u^y}{\partial x},(x,y)\in \Omega,\label{sigma12equation}\\
			\frac{1}{2\mu}(\sigma^{22}-\frac{\lambda}{2\lambda+2\mu}(\sigma^{11}+\sigma^{22}))=\frac{\partial u^y}{\partial y},(x,y)\in \Omega,\label{sigma22equation}\\
			\frac{\partial \sigma^{11}}{\partial x} + \frac{\partial \sigma^{21}}{\partial y}=f^x,(x,y)\in \Omega,\label{uxequation}\\
			\frac{\partial \sigma^{12}}{\partial x} + \frac{\partial \sigma^{22}}{\partial y}=f^y,(x,y)\in \Omega.\label{uyequation}
		\end{align}
		
		Denote by $\{W^x_{i,j+1/2}\},\{W^y_{i+1/2,j}\},\{Z^{11}_{i+1/2,j+1/2}\},\{Z^{12}_{i,j}\},$ and $ \{Z^{22}_{i+1/2,j+1/2}\} $ the approximations to $\{u^x_{i,j+1/2}\},\{u^y_{i+1/2,j}\},\{\sigma^{11}_{i+1/2,j+1/2}\},\{\sigma^{12}_{i,j}\}$ and $\{\sigma^{22}_{i+1/2,j+1/2}\}$,respectively.The scheme is as follows.It is worth noting that,since $\sigma^{12}=\sigma^{21}$,we use $\{Z^{12}_{i,j}\}$ to denote both $\{Z^{12}_{i,j}\}$ and $\{Z^{21}_{i,j}\}$.
		
		\textbf{MAC-E Scheme}:Set the boundary condition as
		\begin{equation}
			\begin{dcases}
				W^x_{0,j+1/2}=W^x_{n_x,j+1/2}=0,&j=0,\cdots,n_y-1,\\
				W^x_{i,0}=W^x_{i,n_y}=0,&i=0,\cdots,n_y,\\
				W^y_{i+1/2,0}=W^y_{i+1/2,n_y}=0,&i=0,\cdots,n_x-1,\\
				W^y_{0,j}=W^y_{n_x,j}=0,&j=0,\cdots,n_y.
			\end{dcases}
		\end{equation}
		Find  $\{W^x_{i,j+1/2}\},\{W^y_{i+1/2,j}\},\{Z^{11}_{i+1/2,j+1/2}\},\{Z^{12}_{i,j}\}$ and $\{Z^{22}_{i+1/2,j+1/2}\}$ such that
		\begin{align}
			&\begin{aligned}
				\frac{1}{2\mu}(Z^{11}_{i+1/2,j+1/2}-\frac{\lambda}{2\lambda+2\mu}(Z^{11}_{i+1/2,j+1/2}+Z^{22}_{i+1/2,j+1/2}))&-d_xW^x_{i+1/2,j+1/2}=0,\\
				i=0,\cdots,n_x-1,j&=0,\cdots,n_y-1,
			\end{aligned}\label{Z11equation}\\
			&\begin{aligned}
				Z^{12}_{i,j}-\mu(D_yW^x_{i,j}+D_xW^y_{i,j})=0,\qquad i=0,\cdots,n_x,j=0,\cdots,n_y,
			\end{aligned}\label{Z12equation}\\
			&\begin{aligned}
				\frac{1}{2\mu}(Z^{22}_{i+1/2,j+1/2}-\frac{\lambda}{2\lambda+2\mu}(Z^{11}_{i+1/2,j+1/2}+Z^{22}_{i+1/2,j+1/2}))&-d_yW^y_{i+1/2,j+1/2}=0,\\
				i=0,\cdots,n_x-1,j&=0,\cdots,n_y-1,
			\end{aligned}\label{Z22equation}\\
			&\begin{aligned}
				D_xZ^{11}_{i,j+1/2}+d_yZ^{12}_{i,j+1/2}=f^1_{i,j+1/2},\quad i=1,\cdots,n_x-1,j=0,\cdots,n_y-1,
			\end{aligned}\label{WXequation}\\
			&\begin{aligned}
				d_xZ^{12}_{i+1/2,j}+D_yZ^{22}_{i+1/2,j}=f^2_{i+1/2,j},\quad i=0,\cdots,n_x-1,j=1,\cdots,n_y-1.
			\end{aligned}\label{WYequation}
		\end{align}
		
		To obtain the stability and error estimate we need the discrete LBB condition.
		First let
		\begin{align}
			\underline{\Sigma}:=\{\underline{\tau}\in\underline{H}(div;\Omega) \},\boldsymbol{U}:=\boldsymbol{L}^2(\Omega).
		\end{align}
		
		\begin{figure}[htbp]
			\centering
			\begin{tabular}{@{}ccc@{}}
				\subcaptionbox{$\mathcal{T}_h^1$}[.24\linewidth]{%
					\includegraphics[width=\linewidth,trim=2pt 2pt 2pt 2pt,clip]{./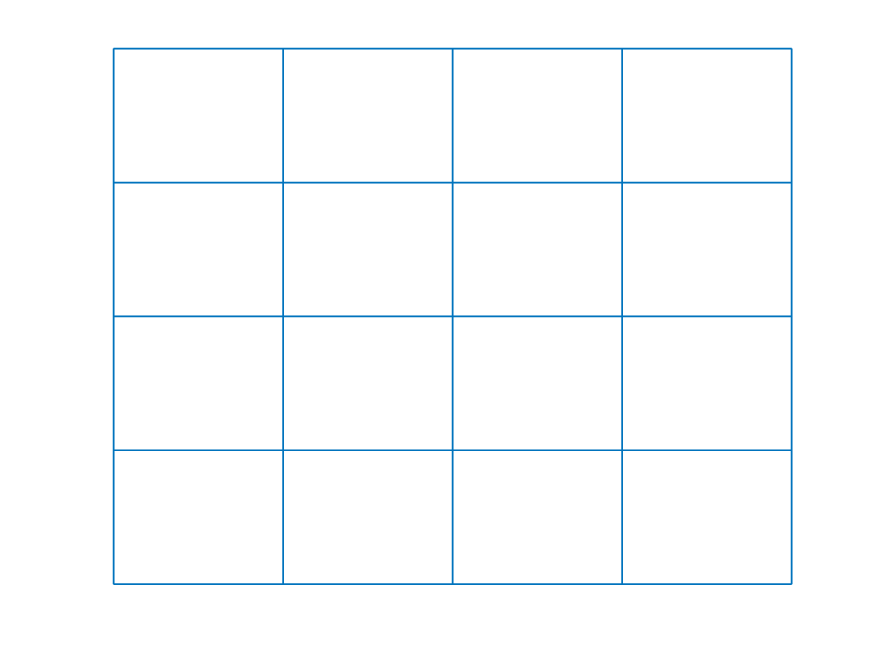}} &
				\subcaptionbox{$\mathcal{T}_h^2$}[.24\linewidth]{%
					\includegraphics[width=\linewidth,trim=2pt 2pt 2pt 2pt,clip]{./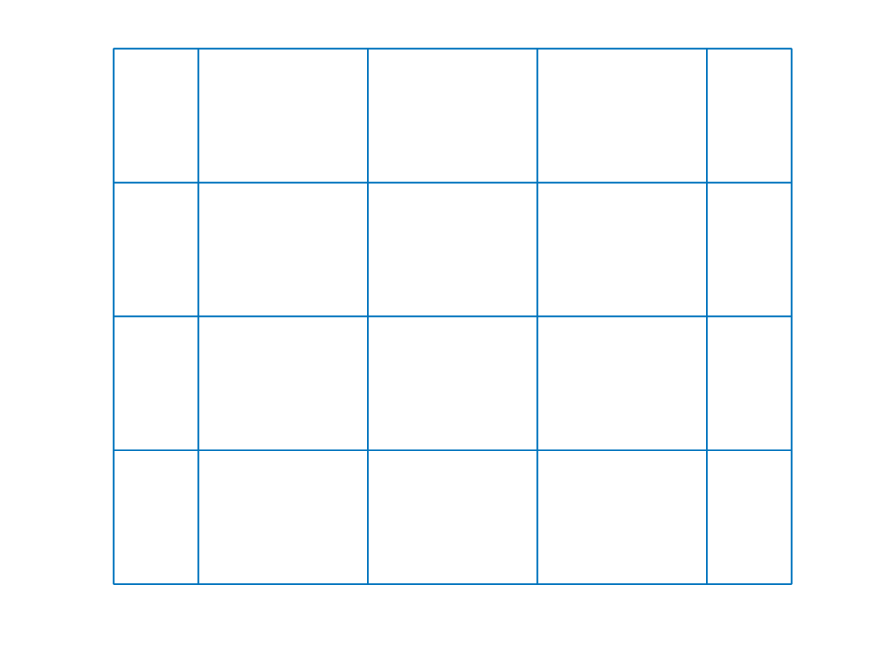}} &
				\subcaptionbox{$\mathcal{T}_h^3$}[.24\linewidth]{%
					\includegraphics[width=\linewidth,trim=2pt 2pt 2pt 2pt,clip]{./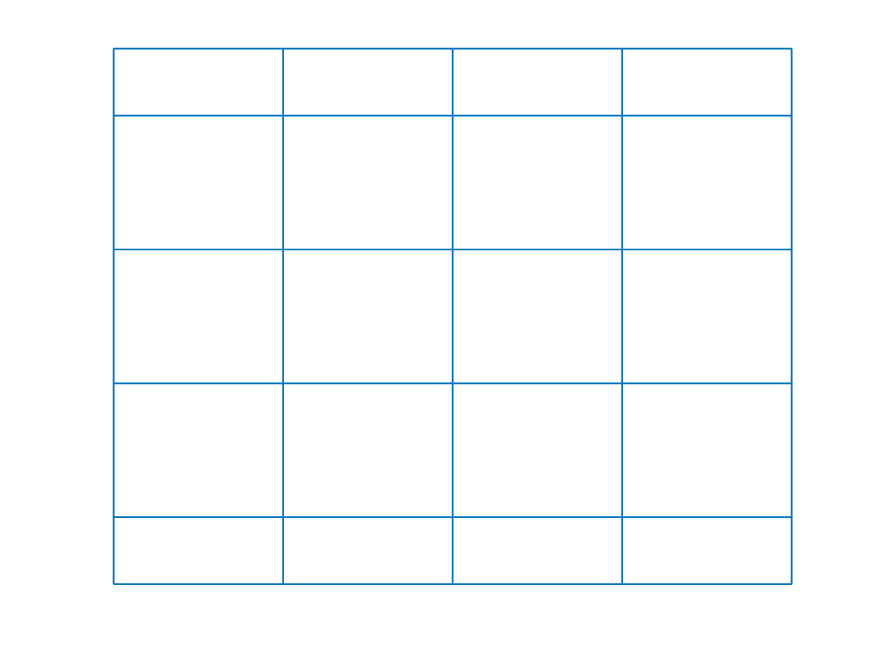}}
			\end{tabular}
			\caption{Three different partitions}
		\end{figure}
			
		Now we construct the finite-dimensional subspaces of $\underline{\Sigma}$ and $\boldsymbol{U}$ on three partitions
		$\mathcal{T}^1_h,\mathcal{T}^2_h,\mathcal{T}^3_h$ of $\Omega$ as follows.
		The original partition $\delta_x \times \delta_y$ is denoted by $\mathcal{T}^1_h$. The partition $\mathcal{T}_h^2$ is generated by connecting all the midpoints of the
		vertical sides of $\Omega_{i+1/2,\,j+1/2}$ and extending the resulting mesh to the
		boundary $\Gamma$. 
		To describe the local behavior of the discrete stress components, we further distinguish interior and boundary cells of $\mathcal{T}_h^2$:
		\begin{align*}
			&\mathcal{T}_{h,int}^2=\{\Omega_{i,j+1/2}=(x_{i-1/2},x_{i+1/2})\times(y_{j},y_{j+1})\in \mathcal{T}_h^2|i=1,\cdots,n_x-1,j=0,\cdots,n_y-1.\},\\
			&\mathcal{T}_{h,\partial_1}^2=\{\Omega_{0,j+1/2}=(x_0,x_{1/2})\times(y_{j},y_{j+1})\in \mathcal{T}_h^2|j=0,\cdots,n_y-1.\},\\
			&\mathcal{T}_{h,\partial_2}^2=\{\Omega_{n_x,j+1/2}=(x_{n_x-1/2},x_{n_x})\times(y_{j},y_{j+1})\in \mathcal{T}_h^2|j=0,\cdots,n_y-1.\}.
		\end{align*}
				
		Then, connecting all the midpoints of the horizontal sides
		of $\Omega_{i+1/2,\,j+1/2}$ and extending the resulting mesh to the boundary $\Gamma$,
		the third partition $\mathcal{T}_h^3$ is obtained.Similarly, we define:
		\begin{align*}
			&\mathcal{T}_{h,int}^3=\{\Omega_{i+1/2,j}=(x_{i},x_{i+1})\times(y_{j-1/2},y_{j+1/2})\in \mathcal{T}_h^3|i=0,\cdots,n_x-1,j=1,\cdots,n_y-1.\},\\
			&\mathcal{T}_{h,\partial_1}^3=\{\Omega_{i+1/2,0}=(x_i,x_{i+1})\times(y_0,y_{1/2})\in \mathcal{T}_h^3|i=0,\cdots,n_x-1.\},\\
			&\mathcal{T}_{h,\partial_2}^3=\{\Omega_{i+1/2,n_y}=(x_i,x_{i+1})\times(y_{n_y-1/2},y_{n_y})\in \mathcal{T}_h^3|i=0,\cdots,n_x-1.\}.
		\end{align*}
		
		Next, we introduce the following two function spaces.
		
		First, we use the notation $C^{m,n}(K)$ to denote the set of
		functions which are $m$-times continuously differentiable with respect to $x$ and $n$-times
		continuously differentiable with respect to $y$ on $K$. Moreover, we use the $C^{-1,n}(\Omega)$ and $C^{m,-1}(\Omega)$
		to denote spaces where no continuity is imposed in the $x$- and $y$-directions, respectively.
		
		We further introduce the tensor--product polynomial space $Q^{m,n}(\Omega)$.
		For integers $m,n\ge0$, we define
		\begin{equation*}
			Q^{m,n}(\Omega)
			:= \Bigl\{
			p : \Omega \to \mathbb{R} \,\Big|\,
			p(x,y) = \sum_{i=0}^{m}\sum_{j=0}^{n} a_{ij} x^{i} y^{j},
			\ a_{ij}\in\mathbb{R}
			\Bigr\}.
		\end{equation*}
		In other words, $Q^{m,n}(\Omega)$ consists of all polynomials which are of degree at
		most $m$ in $x$ and at most $n$ in $y$.

		We then define two global finite-dimensional spaces for these components as follows:
		\begin{align*}
			&\begin{aligned}
				S^1_h := 
				\{v\in C^{0,-1}(\Omega)|&\; 
				v|_K \in Q^{1,0}(K),\forall K \in \mathcal{T}^2_{h,int};v|_K=v(x_{1/2},y);\forall K\in \mathcal{T}^2_{h,\partial_1};\\&\;v|_K=v(x_{n_x-1/2},y);\forall K\in \mathcal{T}^2_{h,\partial_2}\},
			\end{aligned}\\
			&\begin{aligned}
				S^2_h := 
				\{v\in C^{-1,0}(\Omega)|&\; 
				v|_K \in Q^{0,1}(K),\forall K \in \mathcal{T}^3_{h,int};v|_K=v(x,y_{1/2});\forall K\in \mathcal{T}^3_{h,\partial_1};\\&\;v|_K=v(x,y_{n_y-1/2});\forall K\in \mathcal{T}^3_{h,\partial_2}\}.
			\end{aligned}
		\end{align*}
		With the above preparations,we construct a subspace of $\underline{\Sigma}$:
		\begin{align*}
			&\begin{aligned}
				\underline{\Sigma}_h:=\{\underline{\tau}_h\in\underline{H}(div;\Omega) :
				&\tau_h^{11}\in S^1_h(\Omega),\tau_h^{22}\in S^2_h(\Omega);\\
				&\tau_h^{12}|_T=\tau_h^{21}|_T\in Q^{1,1}(T),\forall T\in \mathcal{T}_h^1; \}.
			\end{aligned}
		\end{align*}
		
		Based on the quadrangulation $\mathcal{T}^2_h$ and $\mathcal{T}^3_h$, we construct a subspace of $\boldsymbol{U}$
		\begin{align*}
			\boldsymbol{U}_h=\{(v^x,v^y)\in \boldsymbol{L}^2(\Omega):&v^x|_T\equiv constant,\forall T\in \mathcal{T}_h^2,v_x|_\Gamma=0;\\&v^y|_T\equiv constant,\forall T\in \mathcal{T}_h^3,v_y|_\Gamma=0;\}.
		\end{align*}
		
		Then we introduce the bilinear forms:
		\begin{equation}
			\begin{aligned}
				\;b_h(\underline{\tau},\boldsymbol{W})=(W^x,D_x\tau^{11}+d_y\tau^{12})_{TM}+(W^y,D_y\tau^{22}+d_x\tau^{12})_{MT},
			\end{aligned}
		\end{equation}
		Next we prove that the LBB holds for the subspaces $\underline{\Sigma}_h$ and $\boldsymbol{U}_h$.
		
		\begin{lemma}
			There is a constant $\beta>0$ independent of $h$ and $l$ such that
			\begin{align}
				\sup_{0\neq\underline{\tau}_h\in\underline{\Sigma}_h}\frac{(D_x\tau_h^{11}+d_y\tau_h^{21},v_h^x)_{TM}+(d_x\tau_h^{12}+D_y\tau_h^{22},v_h^y)_{MT}}{\norm{\underline{\tau}_h}}\geq\beta \norm{\boldsymbol{v}_h}.		
			\end{align}
				\label{lemma3.1}
		\end{lemma}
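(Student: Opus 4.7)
Since the space $\underline{\Sigma}_h$ is designed so that the diagonal stress components $\tau_h^{11},\tau_h^{22}$ and the shear $\tau_h^{12}$ decouple structurally, the natural strategy is a Fortin-type explicit construction: given any $\boldsymbol{v}_h=(v_h^x,v_h^y)\in \boldsymbol{U}_h$, I will exhibit a single $\underline{\tau}_h\in\underline{\Sigma}_h$ that makes the numerator on the left-hand side at least $\|\boldsymbol{v}_h\|^2$ while keeping $\|\underline{\tau}_h\|\le C\|\boldsymbol{v}_h\|$ with $C$ independent of $h$ and $l$. Taking this ratio as a lower bound for the supremum then yields $\beta\ge 1/C$.

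The clean choice is to set $\tau_h^{12}\equiv 0$, which trivially satisfies the $Q^{1,1}$ and symmetry requirements and eliminates the $d_y\tau_h^{12}$ and $d_x\tau_h^{12}$ contributions from the numerator. The problem then decouples into two one-dimensional problems: find $\tau_h^{11}$ such that $D_x\tau_h^{11}=v_h^x$ pointwise on the interior nodes $(x_i,y_{j+1/2})$, $1\le i\le n_x-1$, and analogously $\tau_h^{22}$ such that $D_y\tau_h^{22}=v_h^y$. I solve each by a discrete antiderivative: along the line $y=y_{j+1/2}$, set $\tau^{11}_{1/2,j+1/2}:=0$ and propagate
\[
\tau^{11}_{i+1/2,j+1/2}\;:=\;\tau^{11}_{i-1/2,j+1/2}+h_i\,v^x_{i,j+1/2},\qquad i=1,\dots,n_x-1,
\]
and symmetrically for $\tau_h^{22}$ along the vertical lines $x=x_{i+1/2}$. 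By the definitions in \eqref{chafenfuhao}, this gives $(D_x\tau_h^{11})_{i,j+1/2}=v^x_{i,j+1/2}$ at every interior node, and likewise for $D_y\tau_h^{22}$.

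Next I verify that the constructed $\underline{\tau}_h$ really lies in $\underline{\Sigma}_h$. The polynomial-degree constraints defining $S^1_h,S^2_h$ are met by nodal interpolation, and the special boundary-strip rules on $\mathcal{T}^2_{h,\partial_\cdot}$ and $\mathcal{T}^3_{h,\partial_\cdot}$ impose no further restriction on the endpoint values so they are automatically satisfied. For $\underline{H}(\mathrm{div};\Omega)$ conformity, the only places where the row entries can jump are horizontal edges for $\tau_h^{11}$ and vertical edges for $\tau_h^{22}$; but the normal trace of the first row across a horizontal edge is $\tau_h^{12}\equiv 0$ and of the second row across a vertical edge is $\tau_h^{21}\equiv 0$, so no flux jumps arise. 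Across vertical edges $x=x_{i+1/2}$, the two adjacent linear-in-$x$ pieces of $\tau_h^{11}$ share the nodal value $\tau^{11}_{i+1/2,j+1/2}$, hence are continuous; analogously for $\tau_h^{22}$ across horizontal edges.

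With $\tau_h^{12}=0$ the numerator equals $\|v_h^x\|_{TM}^2+\|v_h^y\|_{MT}^2=\|\boldsymbol{v}_h\|^2$. For the denominator, applying Cauchy--Schwarz with weights $h_k$ to the telescoping formula yields
\[
|\tau^{11}_{i+1/2,j+1/2}|^2\;\le\;\Bigl(\sum_{k=1}^{i}h_k\Bigr)\Bigl(\sum_{k=1}^{i}h_k\,|v^x_{k,j+1/2}|^2\Bigr)\;\le\;a\sum_{k=1}^{n_x-1}h_k\,|v^x_{k,j+1/2}|^2,
\]
where $a$ is the $x$-diameter of $\Omega$. Multiplying by $h_{i+1/2}l_{j+1/2}$, summing and using $\sum_i h_{i+1/2}\le a$ gives $\|\tau_h^{11}\|_M^2\le a^2\|v_h^x\|_{TM}^2$, and analogously $\|\tau_h^{22}\|_M^2\le b^2\|v_h^y\|_{MT}^2$. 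Therefore $\|\underline{\tau}_h\|\le \max(a,b)\,\|\boldsymbol{v}_h\|$, which upon insertion into the quotient produces the claimed inf-sup inequality with $\beta=1/\max(a,b)$. The step I expect to require genuine care is the $\underline{H}(\mathrm{div})$ conformity check—confirming that the boundary-strip ansatz is compatible with the recursive definition at $i=0$ and $i=n_x$, and that all normal-trace jumps really vanish; once that is cleared, the rest is algebraic bookkeeping.
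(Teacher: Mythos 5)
Your proposal is correct and follows essentially the same route as the paper: set $\tau_h^{12}=0$ and build $\tau_h^{11},\tau_h^{22}$ as discrete antiderivatives of $v_h^x,v_h^y$ (the paper writes this as $\tau_h^{11}(x_{i+1/2},y_{j+1/2})=\int_0^{x_{i+1/2}}v_h^x(x,y_{j+1/2})\,dx$, which is exactly your telescoping recursion), so that $D_x\tau_h^{11}=v_h^x$, $D_y\tau_h^{22}=v_h^y$, then bound $\|\tau_h^{11}\|_M,\|\tau_h^{22}\|_M$ by the discrete Cauchy--Schwarz argument and insert into the quotient. Your version is in fact slightly more careful than the paper's, tracking the domain-size constants (giving $\beta=1/\max(a,b)$) and explicitly checking the $\underline{H}(\mathrm{div})$ conformity and boundary-strip compatibility, which the paper leaves implicit.
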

			\begin{proof}
				Given $v=(v^x,v^y)^T\in \boldsymbol{U}_h$,we define $\tau_{11}$ as the integration of $v^x$ along the rectangles along the x direction:
				\begin{equation}
					\tau_h^{11}(x_{i+1/2},y_{j+1/2})=\int_{0}^{x_{i+1/2}}v_h^x(x,y_{j+1/2})dx,
				\end{equation}
				\begin{equation}
					\tau_h^{22}(x_{i+1/2},y_{j+1/2})=\int_{0}^{y_{j+1/2}}v_h^y(x_{i+1/2},y)dy.
				\end{equation}
				Then 
				\begin{equation}
					\begin{aligned}
						D_x\tau_h^{11}(x_{i},y_{j+1/2})&=\frac{1}{h_i}(\tau^{11}_h(x_{i+1/2,j+1/2})-\tau^{11}_h(x_{i-1/2,j+1/2}))\\
						&=\frac{1}{h_i}\int_{x_{i-1/2}}^{x_{i+1/2}}v_h^x(x,y_{j+1/2})dx=v^x_h(x_i,y_{j+1/2}),
					\end{aligned}
				\end{equation}
				Similarly we have 
				\begin{equation}
					D_y\tau_h^{22}(x_{i+1/2},y_j)=v^y_h(x_{i+1/2},y_j).
				\end{equation}
				Then, set $\tau_h^{12}=\tau_{h}^{21}=0$, it follows that
				\begin{align}
					D_x\tau_h^{11}(x_i,y_{j+1/2})+d_y\tau_h^{21}(x_i,y_{j+1/2})=D_x\tau_h^{11}(x_i,y_{j+1/2})=v_h^xD_x(x_i,y_{j+1/2}),\\
					d_x\tau_h^{12}(x_{i+1/2},y_j)+D_y\tau_h^{22}(x_{i+1/2},y_j)=D_y\tau_h^{22}(x_{i+1/2},y_j)=v_h^y(x_{i+1/2},y_j).
				\end{align}
				
				For the $L^2$-norm of $\tau_h^{11}$, we have:
				\begin{align}
					\begin{aligned}
						\norm{\tau_h^{11}}^2_{L^2(\Omega)}&=\sum_{i=1}^{n_x-1}\sum_{j=0}^{n_y-1}h_{i+1/2}y_{j+1/2}(\int_0^{x_{i+1/2}}v^x_h(x,y_{j+1/2})dx)^2\\
						&=\sum_{i=1}^{n_x-1}\sum_{j=0}^{n_y-1}h_{i+1/2}y_{j+1/2}(\sum_{k=1}^iv^x_h(x_k,y_{j+1/2})h_{k})^2\\
						&\leq\sum_{i=1}^{n_x-1}\sum_{j=0}^{n_y-1}h_{i+1/2}y_{j+1/2}\sum_{k=1}^ih_k(v^x_h(x_k,y_{j+1/2}))^2\\
						&=\sum_{k=1}^{n_x-1}\sum_{j=0}^{n_y-1}h_{k}y_{j+1/2}(v^x_h(x_k,y_{j+1/2}))^2\sum_{i=k}^{n_x-1}h_{i+1/2}\\
						&\leq\sum_{k=1}^{n_x-1}\sum_{j=0}^{n_y-1}h_{k}y_{j+1/2}(v^x_h(x_k,y_{j+1/2}))^2=\norm{v^x_h}^2_{L^2(\Omega)}.
					\end{aligned}\label{equation3.19}
				\end{align}
				
				For y-direction,a similar argument with $\eqref{equation3.19}$ proves that
				\begin{equation*}
					\norm{\tau_h^{22}}^2_{L^2(\Omega)}\leq\norm{v^y_h}^2_{L^2(\Omega)}.
				\end{equation*}
			Then we could get that:
			\begin{equation}
				\frac{(D_x\tau_h^{11}+d_y\tau_h^{21},v_h^x)_{TM}+(d_x\tau_h^{12}+D_y\tau_h^{22},v_h^y)_{MT}}{\norm{\underline{\tau}_h}^2}=\frac{\norm{\boldsymbol{v}_h}^2}{\norm{\tau_h^{11}}^2_M+\norm{\tau_h^{22}}^2_M}\geq\norm{\boldsymbol{v}_h}^2.
				\label{equation3.20}
			\end{equation}
			Finally, taking the supremum over $\underline{\tau
			}_h$ on the left-hand side of the equation $\eqref{equation3.20}$, we obtain the Lemma $\ref{lemma3.1}$
			\end{proof}
			
	\begin{remark}
		It should be noted that for a uniform mesh, $\eqref{equation3.19}$ becomes
		\begin{equation*}
			\norm{\tau_h^{11}}^2_{L^2(\Omega)}\leq\frac{1}{2}\norm{v^x_h}^2_{L^2(\Omega)}.
		\end{equation*}
		And inf-sup condition will become:
		\begin{align}
			\sup_{0\neq\underline{\tau}_h\in\underline{\Sigma}_h}\frac{(D_x\tau_h^{11}+d_y\tau_h^{21},v_h^x)_{TM}+(d_x\tau_h^{12}+D_y\tau_h^{22},v_h^y)_{MT}}{\norm{\tau_h}}\geq\frac{\sqrt{2}}{2} \norm{\boldsymbol{v}_h}.		
		\end{align}
	\end{remark}
		
		\begin{lemma}
			Let $\{\sigma^{11}_{i+1/2,j+1/2}\},\{\tau^{22}_{i+1/2,j+1/2}\},\{\tau^{12}_{i,j}\}\;and\;\{v^{x}_{i,j+1/2}\},\{v^{y}_{i+1/2,j}\}$ be discrete functions with $v^x_{0,j+1/2}=v^x_{0,j}=v^x_{n_x,j+1/2}=v^x_{n_x,j}=v^y_{i+1/2,0}=v^y_{i,0}=v^y_{i+1/2,n_y}=v^y_{i,n_y}=0$, with proper integral i,j. Then there holds
			\begin{equation}
				\begin{dcases}
					&(D_x\tau^{11},v^x)_{TM}=-(\tau^{11},d_xv^x)_M,\\
					&(D_y\tau^{22},v^y)_{MT}=-(\tau^{22},d_yv^y)_M,\\
					&(d_xv^x,\tau^{11})_M=-(v^x,D_x\tau^{11})_{TM},\\
					&(d_yv^y,\tau^{22})_M=-(v^y,D_y\tau^{22})_{MT},\\
					&(D_xv^y+D_yv^x,\tau^{12})_T=-(v^y,d_x\tau^{12})_{MT}-(v^x,d_y\tau^{12})_{TM}.
				\end{dcases}
			\end{equation}
			\label{lemma3.2}
		\end{lemma}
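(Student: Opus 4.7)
The plan is to prove each identity by unfolding the discrete inner product, rewriting the difference quotient so that the step size in the denominator cancels the weight, and then applying Abel summation (discrete summation by parts) in the relevant coordinate direction. The role of the boundary hypotheses on $v^x$ and $v^y$ is precisely to annihilate the endpoint contributions produced by Abel's transformation, leaving only the shifted product that can be repackaged as the other inner product.

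Concretely, for the first identity $(D_x\tau^{11},v^x)_{TM}=-(\tau^{11},d_xv^x)_M$, I would start from
\[
(D_x\tau^{11},v^x)_{TM}=\sum_{i=1}^{n_x-1}\sum_{j=0}^{n_y-1} l_{j+1/2}\bigl(\tau^{11}_{i+1/2,j+1/2}-\tau^{11}_{i-1/2,j+1/2}\bigr)\,v^x_{i,j+1/2},
\]
where the $h_i$ weight has already cancelled the denominator in the definition of $D_x$. Fixing $j$ and Abel-summing in $i$, the boundary contributions at $i=0$ and $i=n_x$ are exactly those killed by $v^x_{0,j+1/2}=v^x_{n_x,j+1/2}=0$; what remains is $-\sum_{i=0}^{n_x-1} \tau^{11}_{i+1/2,j+1/2}(v^x_{i+1,j+1/2}-v^x_{i,j+1/2})$, which after multiplying and dividing by $h_{i+1/2}$ is precisely $-(\tau^{11},d_xv^x)_M$. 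The identity for $D_y\tau^{22}$ is entirely symmetric (swap $x\leftrightarrow y$ and use $v^y_{i+1/2,0}=v^y_{i+1/2,n_y}=0$), and the third and fourth identities are merely the first two read in the opposite direction.

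The only nontrivial book-keeping occurs in the last identity, which mixes both $x$- and $y$-directions and uses the shear component $\tau^{12}$ defined on the $T$-grid. I would split the left-hand side $(D_xv^y+D_yv^x,\tau^{12})_T$ into its two pieces. For $(D_xv^y,\tau^{12})_T$ I apply the $x$-Abel summation as above, this time using the vanishing of $v^y$ on the vertical boundary segments ($v^y_{0,j}=v^y_{n_x,j}=0$) to discard the endpoint terms, producing $-(v^y,d_x\tau^{12})_{MT}$. For $(D_yv^x,\tau^{12})_T$ I Abel-sum in the $y$-index and use $v^x_{i,0}=v^x_{i,n_y}=0$, producing $-(v^x,d_y\tau^{12})_{TM}$. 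Adding gives the claimed relation.

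The main obstacle I anticipate is not mathematical depth but index discipline: the five inner products $(\cdot,\cdot)_M$, $(\cdot,\cdot)_T$, $(\cdot,\cdot)_{TM}$, $(\cdot,\cdot)_{MT}$ live on different index ranges, and for each identity I must verify that after the shift in Abel summation the surviving index range matches exactly the range of the target inner product, and that the boundary indices killed by the hypotheses on $v^x,v^y$ are exactly those produced by the summation by parts. Once this matching is carried out once carefully, the remaining four identities reduce to the same pattern by symmetry.
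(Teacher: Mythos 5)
Your proposal is correct and follows essentially the same route as the paper: the paper handles the first four identities by citing the standard summation-by-parts argument (Weiser--Wheeler) and proves the fifth by exactly the splitting you describe, i.e.\ Abel summation in $x$ for $(D_xv^y,\tau^{12})_T$ and in $y$ for $(D_yv^x,\tau^{12})_T$, with the boundary values of $v^x,v^y$ (including the corrected boundary quotients $D_x\phi_{0,m}$, $D_x\phi_{n_x,m}$, etc.) annihilating the endpoint terms. Your identification of which vanishing boundary values are actually needed in each direction matches what the paper's computation implicitly uses, so no gap remains.
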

		
		\begin{proof}
			The derivations of the first four items follow directly from the same approach used in case \cite{weiser1988convergence}. As for the fifth item, we have:
			\begin{equation}
				\begin{aligned}
					(D_xv^y,\tau^{12})_T=&\sum_{i=0}^{n_x}\sum_{j=0}^{n_y}D_xv^y_{i,j}\tau^{12}_{i,j}h_il_j\\
					=&\sum_{j=1}^{n_y-1}\sum_{i=1}^{n_x-1}D_xv^y_{i,j}\tau^{12}_{i,j}h_il_j+\sum_{j=1}^{n_y-1}(D_xv^y_{0,j}\tau^{12}_{0,j}h_0l_j+D_xv^y_{n_x,j}\tau^{12}_{n_x,j}h_{n_x}l_j)\\
					=&-\sum_{j=1}^{n_y-1}\sum_{i=0}^{n_x-1} v^y_{i+1/2,j}d_x\tau^{12}_{i+1/2,j}h_{i+1/2}l_j-\sum_{j=0}^{n_y}v^y_{1/2,j}\tau^{12}_{0,j}l_j+\sum_{j=0}^{n_y}v^y_{n_x-1/2,j}\tau^{12}_{n_x,j}l_j\\
					&+\sum_{j=1}^{n_y-1}(v^y_{1/2,j}-v^y_{0,j})\tau^{12}_{0,j}l_j+(v^y_{n_x,j}-v^y_{n_x-1/2,j})\tau^{12}_{n_x,j}l_j\\
					=&-\sum_{i=0}^{n_x-1}\sum_{j=1}^{n_y-1} v^y_{i+1/2,j}d_x\tau^{12}_{i+1/2,j}h_{i+1/2}l_j=-(v^y,d_x\tau^{12})_{MT}.
				\end{aligned}
			\end{equation}
			The proof is completed.
		\end{proof}
		
		Then we set:
		\begin{align}
			&\begin{aligned}
				a_h(\lambda;\underline{Z},\underline{\tau})=&\frac{1}{\mu}(Z^{12},\tau^{12})_T-\frac{1}{2\mu}\frac{\lambda}{2\lambda+2\mu}((Z^{11},\tau^{22})_M+(Z^{22},\tau^{11})_M)\\
				+&\frac{1}{2\mu}(1-\frac{\lambda}{2\lambda+2\mu})((Z^{11},\tau^{11})_M+(Z^{22},\tau^{22})_M),\\
			\end{aligned}\label{ah}\\
			&\begin{aligned}
				\quad\;\;\;(\boldsymbol{f},\boldsymbol{v})=(f^1,v^x)_{TM}+(f^2,v^y)_{MT}.
			\end{aligned}
		\end{align}
		
		\begin{lemma}
			For $\underline{Z},\underline{\tau}\in \underline{\Sigma}_h$,there exist $C_1(\mu)$ so that
			\begin{align}
				&a_h(\lambda;\underline{Z},\underline{Z})\geq \frac{1}{\mu}\norm{Z^{12}}_T^2+\frac{1}{2\lambda+2\mu}(\norm{Z^{11}}_M^2+\norm{Z^{22}}_M^2),\label{equation3.24}\\
				&a_h(\lambda;\underline{Z},\underline{\tau})\leq C_1(\mu)\norm{\underline{Z}}\norm{\underline{\tau}}.\label{equation3.25}
			\end{align}
			\label{lemma3.3}
		\end{lemma}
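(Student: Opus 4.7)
My plan is to treat the two inequalities separately; both reduce to short algebraic manipulations once we exploit the identity
\[
1-\frac{\lambda}{2\lambda+2\mu}=\frac{\lambda+2\mu}{2\lambda+2\mu},
\]
and the uniform bounds $0\le\frac{\lambda}{2\lambda+2\mu}<\frac12$ and $\frac12<\frac{\lambda+2\mu}{2\lambda+2\mu}\le 1$ for $\lambda\in[0,\infty)$.

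For the coercivity estimate \eqref{equation3.24}, I would set $\underline{\tau}=\underline{Z}$ in \eqref{ah}, which yields
\[
a_h(\lambda;\underline{Z},\underline{Z})
=\frac{1}{\mu}\norm{Z^{12}}_T^2
-\frac{1}{\mu}\frac{\lambda}{2\lambda+2\mu}(Z^{11},Z^{22})_M
+\frac{1}{2\mu}\frac{\lambda+2\mu}{2\lambda+2\mu}\bigl(\norm{Z^{11}}_M^2+\norm{Z^{22}}_M^2\bigr).
\]
The only step that is not bookkeeping is controlling the indefinite cross term. Applying Cauchy--Schwarz (or $2|ab|\le a^2+b^2$) gives $|(Z^{11},Z^{22})_M|\le\tfrac12(\norm{Z^{11}}_M^2+\norm{Z^{22}}_M^2)$, so its absolute value is at most $\tfrac{1}{2\mu}\tfrac{\lambda}{2\lambda+2\mu}(\norm{Z^{11}}_M^2+\norm{Z^{22}}_M^2)$. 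Subtracting this from the diagonal contribution produces the coefficient
\[
\frac{1}{2\mu}\cdot\frac{(\lambda+2\mu)-\lambda}{2\lambda+2\mu}=\frac{1}{2\lambda+2\mu},
\]
which is exactly the $\lambda$-robust factor claimed in \eqref{equation3.24}. This is the decisive computation, and it is the place where the structure of the compliance tensor matters; no norm on $\underline{Z}$ involving $\mu$ alone controls the full thing uniformly in $\lambda$, so the bound must degenerate as $\lambda\to\infty$ in exactly this manner.

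For the continuity estimate \eqref{equation3.25}, I would simply apply the discrete Cauchy--Schwarz inequality to each of the five summands in \eqref{ah}. Since the coefficients $\tfrac{1}{\mu}$, $\tfrac{1}{2\mu}\tfrac{\lambda}{2\lambda+2\mu}$, and $\tfrac{1}{2\mu}(1-\tfrac{\lambda}{2\lambda+2\mu})$ are all uniformly bounded by a constant depending only on $\mu$ (at most $\tfrac{1}{\mu}$), we obtain
\[
|a_h(\lambda;\underline{Z},\underline{\tau})|
\le \frac{1}{\mu}\norm{Z^{12}}_T\norm{\tau^{12}}_T
+\frac{C}{\mu}\bigl(\norm{Z^{11}}_M+\norm{Z^{22}}_M\bigr)\bigl(\norm{\tau^{11}}_M+\norm{\tau^{22}}_M\bigr),
\]
and one more application of Cauchy--Schwarz in the finite sum of norms groups everything into $C_1(\mu)\norm{\underline{Z}}\,\norm{\underline{\tau}}$.

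The only real obstacle is the cancellation in \eqref{equation3.24}; the continuity bound is essentially immediate once the $\lambda$-coefficients are recognized as uniformly bounded. I do not anticipate any issues with the discrete inner products, since each term of $a_h$ pairs quantities living on the same grid ($M$ or $T$), so no summation-by-parts or interpolation between grids is needed here.
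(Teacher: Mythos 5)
Your proposal is correct and follows essentially the same route as the paper: testing with $\underline{\tau}=\underline{Z}$, absorbing the cross term $(Z^{11},Z^{22})_M$ by Cauchy--Schwarz so that the coefficient collapses to $\frac{1}{2\mu}\cdot\frac{2\mu}{2\lambda+2\mu}=\frac{1}{2\lambda+2\mu}$ for the coercivity bound, and termwise Cauchy--Schwarz with the uniformly bounded $\lambda$-coefficients followed by a final regrouping for the continuity bound. No substantive differences from the paper's argument.
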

		\begin{proof}
			Let $\underline{\tau}=\underline{Z}$ in $a_h$,we can obtain
			\begin{align}
				\begin{aligned}
					a_h(\lambda;\underline{Z},\underline{Z})&=\frac{1}{\mu}\norm{Z^{12}}_T^2-\frac{1}{2\mu}\frac{\lambda}{\lambda+\mu}(Z^{11},Z^{22})_M+\frac{1}{2\mu}(1-\frac{\lambda}{2\lambda+2\mu})(\norm{Z^{11}}_M^2+\norm{Z^{22}}_M^2)\\
					&\geq\frac{1}{\mu}\norm{Z^{12}}_T^2+\frac{1}{2\lambda+2\mu}(\norm{Z^{11}}_M^2+\norm{Z^{22}}_M^2).
				\end{aligned}
			\end{align}
			So we get the $\eqref{equation3.24}$.
			On the other hand:
			\begin{align}
					a_h(\lambda;\underline{Z},\underline{\tau})=&\frac{1}{\mu}(Z^{12},\tau^{12})_T-\frac{1}{2\mu}\frac{\lambda}{2\lambda+2\mu}((Z^{11},\tau^{22})_M+(Z^{22},\tau^{11})_M)\notag\\
					&+\frac{1}{2\mu}(1-\frac{\lambda}{2\lambda+2\mu})((Z^{11},\tau^{11})_M+(Z^{22},\tau^{22})_M)\notag\\
					\leq&\frac{1}{\mu}\norm{Z^{12}}_T\norm{\tau^{12}}_T+\frac{1}{2\mu}\frac{\lambda}{2\lambda+2\mu}(\norm{Z^{11}}_M\norm{\tau^{22}}_M+\norm{Z^{22}}_M\norm{\tau^{11}}_M)\notag\\
					&+\frac{1}{2\mu}(1-\frac{\lambda}{2\lambda+2\mu})(\norm{Z^{11}}_M\norm{\tau^{11}}_M+\norm{Z^{22}}_M\norm{\tau^{22}}_M)\notag\\
					\leq&\frac{1}{2\mu}(2\norm{Z^{12}}_T\norm{\tau^{12}}_T+\norm{Z^{11}}_M\norm{\tau^{11}}_M+\norm{Z^{22}}_M\norm{\tau^{22}}_M\notag\\
					&\quad+\norm{Z^{11}}_M\norm{\tau^{22}}_M+\norm{Z^{22}}_M\norm{\tau^{11}}_M)\notag\\
					\leq&\frac{1}{2\mu}(2\norm{Z^{12}}_T^2+\norm{Z^{11}}_M^2+\norm{Z^{22}}_M^2)^{1/2}(2\norm{\tau^{12}}_T^2+\norm{\tau^{11}}_M^2+\norm{\tau^{22}}_M^2)^{1/2}\notag\\
					=&\frac{1}{2\mu}\norm{\underline{Z}}\norm{\underline{\tau}}=C_1(\mu)\norm{\underline{Z}}\norm{\underline{\tau}}.
				\label{equation3.29}
			\end{align}
			The proof is completed.
		\end{proof}

		From the above derivations, we can obtain the following conclusion:
		\begin{corollary}
			For $a_h(\underline{Z},\underline{Z})$ and $ \frac{1}{\mu}\norm{Z^{12}}_T^2+\frac{1}{2\lambda+2\mu}(\norm{Z^{11}}_M^2+\norm{Z^{22}}_M^2)$,there exist $C_2(\mu
			)$ and $C_3(\mu)$,independent of h and k such that
			\begin{equation}
				C_2(\mu)a_h(\lambda;\underline{Z},\underline{Z})\leq\frac{1}{\mu}\norm{Z^{12}}_T^2+\frac{1}{2\lambda+2\mu}(\norm{Z^{11}}_M^2+\norm{Z^{22}}_M^2)\leq  C_3(\mu)a_h(\lambda;\underline{Z},\underline{Z}).
			\end{equation}
		\end{corollary}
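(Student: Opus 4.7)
The corollary asserts a two-sided equivalence between $a_h(\lambda;\underline{Z},\underline{Z})$ and the weighted quantity $Q(\underline{Z}) := \frac{1}{\mu}\norm{Z^{12}}_T^2+\frac{1}{2\lambda+2\mu}(\norm{Z^{11}}_M^2+\norm{Z^{22}}_M^2)$. My plan is to read both inequalities directly off the algebraic identity
\begin{equation*}
a_h(\lambda;\underline{Z},\underline{Z}) = \tfrac{1}{\mu}\norm{Z^{12}}_T^2 + \tfrac{1}{2\mu}\bigl(\norm{Z^{11}}_M^2+\norm{Z^{22}}_M^2\bigr) - \tfrac{1}{2\mu}\cdot\tfrac{\lambda}{2\lambda+2\mu}\,\norm{Z^{11}+Z^{22}}_M^2,
\end{equation*}
obtained by completing the square on the cross term $(Z^{11},Z^{22})_M$ in definition \eqref{ah} of $a_h$. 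No new tools beyond elementary algebra are required; in particular, the right half of the equivalence is already contained in the proof of Lemma \ref{lemma3.3}.

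For the right inequality $Q(\underline{Z})\le C_3(\mu)\,a_h(\lambda;\underline{Z},\underline{Z})$, estimate \eqref{equation3.24} already gives $C_3(\mu)=1$: applying the elementary bound $\norm{Z^{11}+Z^{22}}_M^2\le 2(\norm{Z^{11}}_M^2+\norm{Z^{22}}_M^2)$ to the subtracted term of the identity above and simplifying $\tfrac{1}{2\mu}-\tfrac{1}{2\mu}\cdot\tfrac{\lambda}{\lambda+\mu}=\tfrac{1}{2(\lambda+\mu)}$ recovers the weight $\tfrac{1}{2\lambda+2\mu}$ exactly. I would simply invoke \eqref{equation3.24} at this step.

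For the left inequality, dropping the nonpositive last term of the identity yields the free upper bound $a_h(\lambda;\underline{Z},\underline{Z}) \le \tfrac{1}{\mu}\norm{Z^{12}}_T^2 + \tfrac{1}{2\mu}(\norm{Z^{11}}_M^2+\norm{Z^{22}}_M^2)$. The $\norm{Z^{12}}_T^2$ coefficient already equals the target $1/\mu$, while the $\norm{Z^{11}}_M^2+\norm{Z^{22}}_M^2$ coefficient differs from $\tfrac{1}{2\lambda+2\mu}$ by the factor $(\lambda+\mu)/\mu$, so choosing $C_2 = \mu/(\lambda+\mu)$ gives $C_2\,a_h(\lambda;\underline{Z},\underline{Z})\le Q(\underline{Z})$. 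The only genuine subtlety, and the main point worth flagging in the write-up, is that this $C_2$ depends on $\lambda$ as well as on $\mu$: the test case $Z^{11}\equiv 1$, $Z^{22}\equiv -1$, $Z^{12}\equiv 0$ yields $a_h=|\Omega|/\mu$ and $Q=|\Omega|/(\lambda+\mu)$, so the ratio is sharp and cannot be made uniform in the Lamé parameter. The notation $C_2(\mu)$ in the statement should accordingly be read as ``constant independent of the mesh sizes $h$ and $k$'' rather than as uniform in $\lambda$, and this is consistent with how the corollary is used subsequently (the locking-free behaviour of the scheme is already captured by the sharp coercivity bound \eqref{equation3.24}, not by this norm equivalence).
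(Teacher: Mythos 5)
Your proof is correct, and for the right-hand bound it is exactly the paper's route: the corollary is stated with no separate proof, as an immediate consequence of Lemma \ref{lemma3.3}, and the inequality $\frac{1}{\mu}\norm{Z^{12}}_T^2+\frac{1}{2\lambda+2\mu}(\norm{Z^{11}}_M^2+\norm{Z^{22}}_M^2)\le a_h(\lambda;\underline{Z},\underline{Z})$ with $C_3(\mu)=1$ is literally \eqref{equation3.24}. For the left-hand bound, your completed-square identity with the nonpositive term $-\tfrac{1}{2\mu}\tfrac{\lambda}{2\lambda+2\mu}\norm{Z^{11}+Z^{22}}_M^2$ dropped is a cleaner and sharper substitute for the paper's implicit appeal to the continuity estimate \eqref{equation3.25}: that estimate is phrased in the unweighted norm $\norm{\underline{Z}}$, and converting $\norm{\underline{Z}}^2$ into the $\lambda$-weighted quantity costs a factor $2\lambda+2\mu$, so the paper's own ingredients also yield only $C_2=\mu/(\lambda+\mu)$, the same constant you obtain. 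Your main added contribution is genuine and correct: the test tensor $Z^{11}\equiv 1$, $Z^{22}\equiv -1$, $Z^{12}\equiv 0$ gives $a_h=|\Omega|/\mu$ while the weighted quantity equals $|\Omega|/(\lambda+\mu)$, so no $\lambda$-independent $C_2$ can exist and the notation $C_2(\mu)$ in the corollary can only be read as ``independent of the mesh parameters,'' not as uniform in $\lambda$. This looseness does not propagate into the later analysis, since the superconvergence argument of Section~4 uses only the coercivity direction (the analogue of \eqref{equation3.24} appearing in \eqref{equation4.32}), but you are right to flag it explicitly.
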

		
		Combining $Lemma\,\ref{lemma3.3}$, and a similar detailed derivation will be presented later in the section 4, then the scheme $\eqref{Z11equation}-\eqref{WYequation}$ can be written as 
		\begin{align}
			a_h(\lambda;\underline{Z},\underline{\tau})+b_h(\underline{\tau},\boldsymbol{W})&=0,\forall \tau\in \underline{\Sigma}_h,\\
			b_h(\underline{Z},\boldsymbol{v})&=(\boldsymbol{f},\boldsymbol{v}),\forall \boldsymbol{v}\in\boldsymbol{U}_h.
		\end{align}

		From Lemmas $\eqref{lemma3.1},\eqref{lemma3.3}$ and the abstract theory for saddle point problem Girault and Raviart \cite{girault2012finite} we can obtain the next theorem.
		
		\begin{theorem}
			For the solution of $\eqref{Z11equation}-\eqref{WYequation}$ there exists a positive constant $C_4(\mu)$ independent
			of h and l such that
			\begin{equation}
				\frac{1}{\mu}\norm{Z^{12}}_T+\frac{1}{2\lambda+2\mu}(\norm{Z^{11}}_M+\norm{Z^{22}}_M)+\norm{\boldsymbol{W}}\leq C_4(\mu)(\norm{f^1}_{TM}+\norm{f^2}_{MT}).
			\end{equation}
		\end{theorem}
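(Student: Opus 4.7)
The plan is to recognize \eqref{Z11equation}--\eqref{WYequation} as a symmetric saddle--point system and apply the abstract Brezzi / Girault--Raviart stability theorem, with the crucial modeling choice of equipping $\underline{\Sigma}_h$ with the \emph{energy norm} $\|\underline{\tau}\|_a:=\sqrt{a_h(\lambda;\underline{\tau},\underline{\tau})}$ rather than $\|\underline{\tau}\|$: it is this choice that will make the stability constant $C_4(\mu)$ independent of $\lambda$. To set up the saddle--point form, I would test \eqref{Z11equation}--\eqref{Z22equation} against $\underline{\tau}\in\underline{\Sigma}_h$ and \eqref{WXequation}--\eqref{WYequation} against $\boldsymbol{v}\in\boldsymbol{U}_h$, and use the discrete integration--by--parts identities of Lemma~\ref{lemma3.2} to transfer the discrete derivatives onto the stress; this yields exactly the variational system displayed immediately above the theorem.

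Next, taking $\underline{\tau}=\underline{Z}$ and $\boldsymbol{v}=\boldsymbol{W}$ and subtracting gives the energy identity $a_h(\lambda;\underline{Z},\underline{Z})=-(\boldsymbol{f},\boldsymbol{W})$, hence $\|\underline{Z}\|_a^2\le(\|f^1\|_{TM}+\|f^2\|_{MT})\,\|\boldsymbol{W}\|$ by Cauchy--Schwarz. The main work is to prove an inf--sup condition for $b_h$ in this energy norm. For this I plan to reuse the explicit test stress $\underline{\tau}^{\ast}$ constructed in the proof of Lemma~\ref{lemma3.1}, which has $\tau^{\ast,12}\equiv 0$ and $\tau^{\ast,11}$, $\tau^{\ast,22}$ defined by discrete integration of $W^x$ and $W^y$, so that $b_h(\underline{\tau}^{\ast},\boldsymbol{W})=\|\boldsymbol{W}\|^2$ and $\|\tau^{\ast,11}\|_M^2+\|\tau^{\ast,22}\|_M^2\le\|\boldsymbol{W}\|^2$. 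The key algebraic step is to rewrite, using $\tau^{\ast,12}=0$,
\[
a_h(\lambda;\underline{\tau}^{\ast},\underline{\tau}^{\ast})=\frac{\lambda}{2\mu(2\lambda+2\mu)}\|\tau^{\ast,11}-\tau^{\ast,22}\|_M^2+\frac{1}{2\lambda+2\mu}\bigl(\|\tau^{\ast,11}\|_M^2+\|\tau^{\ast,22}\|_M^2\bigr);
\]
since both coefficients are bounded by $1/(2\mu)$ uniformly in $\lambda$, one obtains $\|\underline{\tau}^{\ast}\|_a^2\le\mu^{-1}\|\boldsymbol{W}\|^2$, and consequently $\sqrt{\mu}\,\|\boldsymbol{W}\|\le\sup_{\underline{\tau}\neq 0}b_h(\underline{\tau},\boldsymbol{W})/\|\underline{\tau}\|_a$.

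To close the argument I would use the first mixed equation to replace $b_h(\underline{\tau},\boldsymbol{W})=-a_h(\lambda;\underline{Z},\underline{\tau})$ and Cauchy--Schwarz for the symmetric positive form $a_h$ to obtain $|b_h(\underline{\tau},\boldsymbol{W})|\le\|\underline{Z}\|_a\|\underline{\tau}\|_a$; combined with the previous inf--sup this gives $\|\boldsymbol{W}\|\le\mu^{-1/2}\|\underline{Z}\|_a$. Substituting back into the energy identity yields $\|\underline{Z}\|_a\le\mu^{-1/2}(\|f^1\|_{TM}+\|f^2\|_{MT})$ and then $\|\boldsymbol{W}\|\le\mu^{-1}(\|f^1\|_{TM}+\|f^2\|_{MT})$. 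The coercivity bound \eqref{equation3.24} turns $\|\underline{Z}\|_a^2$ back into $\mu^{-1}\|Z^{12}\|_T^2+(2\lambda+2\mu)^{-1}(\|Z^{11}\|_M^2+\|Z^{22}\|_M^2)$; taking square roots componentwise and using $2\lambda+2\mu\ge 2\mu$ then gives each of the three summands $\mu^{-1}\|Z^{12}\|_T$, $(2\lambda+2\mu)^{-1}(\|Z^{11}\|_M+\|Z^{22}\|_M)$ and $\|\boldsymbol{W}\|$ bounded by a $\mu$-dependent multiple of $\|f^1\|_{TM}+\|f^2\|_{MT}$, as claimed. The only non--routine step is the energy--norm inf--sup above: a naive use of the continuity bound \eqref{equation3.25} would introduce a factor $\sqrt{2\lambda+2\mu}$ that would destroy the locking--free character of the final estimate, and the cancellation in the displayed identity --- which crucially exploits $\tau^{\ast,12}=0$ --- is precisely what produces a $\lambda$--uniform constant $C_4(\mu)$.
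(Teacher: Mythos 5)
Your proposal is correct, and it is in fact more than the paper provides: the paper proves this theorem by a one-line appeal to Lemma \ref{lemma3.1}, Lemma \ref{lemma3.3} and the abstract saddle-point theory of Girault--Raviart, with no further detail. Your argument uses the same ingredients (the mixed reformulation obtained through Lemma \ref{lemma3.2}, the test stress constructed in the proof of Lemma \ref{lemma3.1}, and the coercivity bound \eqref{equation3.24}), but executes the stability estimate by hand in the energy norm $\norm{\cdot}_a$ instead of quoting the abstract theorem in the norm $\norm{\cdot}$. This is a genuine improvement in rigor on the point that matters most here: since the coercivity constant in \eqref{equation3.24} degenerates like $1/(2\lambda+2\mu)$, a blunt application of the abstract theory with the norms of Lemmas \ref{lemma3.1} and \ref{lemma3.3} yields constants that blow up with $\lambda$ (in particular for $\norm{\boldsymbol{W}}$), so the paper's citation leaves the claimed $\lambda$-uniformity of $C_4(\mu)$ implicit, whereas your energy-norm inf--sup makes it explicit. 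Your key identity
\begin{equation*}
a_h(\lambda;\underline{\tau}^{\ast},\underline{\tau}^{\ast})=\frac{\lambda}{2\mu(2\lambda+2\mu)}\norm{\tau^{\ast,11}-\tau^{\ast,22}}_M^2+\frac{1}{2\lambda+2\mu}\bigl(\norm{\tau^{\ast,11}}_M^2+\norm{\tau^{\ast,22}}_M^2\bigr)
\end{equation*}
checks out (the coefficients of $\norm{\tau^{11}}_M^2$ and of the cross term match those of $a_h$ when $\tau^{12}=0$), the Cauchy--Schwarz step for the symmetric positive form $a_h$ is legitimate by \eqref{equation3.24}, and the final componentwise conversion using $2\lambda+2\mu\ge 2\mu$ reproduces exactly the weighted left-hand side of the theorem. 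Two minor caveats you inherit from the paper rather than introduce: the bound $\norm{\tau^{\ast,11}}_M^2+\norm{\tau^{\ast,22}}_M^2\le\norm{\boldsymbol{W}}^2$ from the proof of Lemma \ref{lemma3.1} implicitly assumes the domain side lengths are at most one (otherwise a domain-dependent constant appears, harmlessly absorbed into $C_4$), and the membership $\underline{\tau}^{\ast}\in\underline{\Sigma}_h$ is taken from the paper's own construction.
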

		
		\section{Superconvergence analysis.}In this section with the analytical solution $\underline{\sigma}$ and $\boldsymbol{u}$ sufficiently smooth, we obtain some superconvergence results about the stress and displacement. We give some lemmas first. The partition parameters $h_i$ and $l_j$ can be looked at as discrete functions.
		
		Define the discrete interpolations of $\sigma^{11},\sigma^{22}$ and $\sigma^{12}$ in $\Omega$ and on the boundary $\partial \Omega$ as follows:
		\begin{align}
			&\tilde{\sigma}^{11}_{i+1/2,j+1/2}=(\sigma^{11}-\delta^{11})_{i+1/2,j+1/2},\quad 0\leq i\leq n_x-1,1\leq j\leq n_y-1,\label{equation4.1}\\
			&\tilde{\sigma}^{22}_{i+1/2,j+1/2}=(\sigma^{22}-\delta^{22})_{i+1/2,j+1/2},\quad 0\leq i\leq n_x-1,1\leq j\leq n_y-1,\label{equation4.2}\\
			&\tilde{\sigma}^{12}_{i,j}=\sigma^{12}_{i,j},\quad \qquad\qquad\qquad\qquad\qquad\quad\;\; 0\leq i\leq n_x,1\leq j\leq n_y.\label{equation4.3}
		\end{align}
		where
		\begin{align}
			&\delta^{11}_{i+1/2,j+1/2}=\frac{h^2_{i+1/2}}{8}\frac{\partial^2\sigma^{11}_{i+1/2,j+1/2}}{\partial x^2}+\frac{l^2_{j+1/2}}{8}\frac{\partial^2\sigma^{11}_{i+1/2,j+1/2}}{\partial y^2},\label{equation4.4}\\
			&\delta^{22}_{i+1/2,j+1/2}=\frac{h^2_{i+1/2}}{8}\frac{\partial^2\sigma^{22}_{i+1/2,j+1/2}}{\partial x^2}+\frac{l^2_{j+1/2}}{8}\frac{\partial^2\sigma^{22}_{i+1/2,j+1/2}}{\partial y^2}.\label{equation4.5}
		\end{align}
		 Then, using Taylor’s expansion similar to \cite{rui2017stability}, we obtain the following lemma.
		\begin{lemma}
			If $\underline{\sigma}$ is sufficiently smooth then for $\sigma^{11},\sigma^{22}$ there hold
			\begin{equation}
				\begin{dcases}
					(\frac{\partial \sigma^{11}}{\partial x})_{i,j+1/2}=D_x\tilde{\sigma}^{11}_{i,j+1/2}+\epsilon^x_{i,j+1/2}(\sigma^{11}),\\
					(\frac{\partial \sigma^{22}}{\partial y})_{i+1/2,j}=D_y\tilde{\sigma}^{22}_{i+1/2,j}+\epsilon^y_{i+1/2,j}(\sigma^{22}).
				\end{dcases}
			\end{equation}
			with the approximate properties  
			\begin{equation}
				\epsilon^x_{i,j+1/2}(\sigma^{11})=O((h^2+l^2)\norm{\sigma^{11}}_{3,\infty}),\epsilon^y_{i+1/2,j}(\sigma^{22})=O((h^2+l^2)\norm{\sigma^{22}}_{3,\infty}).
			\end{equation}
			\label{lemma 4.1}
		\end{lemma}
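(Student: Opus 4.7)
The plan is to read the statement as a Taylor-expansion identity: on a non-uniform staggered grid, the plain central difference $D_x\sigma^{11}_{i,j+1/2}$ is only first-order consistent with $\partial_x\sigma^{11}$, and the correction $\delta^{11}$ defined in \eqref{equation4.4} is engineered precisely to annihilate that first-order bias. So the proof reduces to Taylor-expanding both $D_x\sigma^{11}$ and $D_x\delta^{11}$ about the point $(x_i,y_{j+1/2})$ and observing that the $O(h)$ contributions cancel.

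First I would expand $\sigma^{11}(x_{i\pm 1/2},y_{j+1/2})$ through third order about $(x_i,y_{j+1/2})$, using $x_{i+1/2}-x_i=h_{i+1/2}/2$ and $x_i-x_{i-1/2}=h_{i-1/2}/2$. Forming the difference and dividing by $h_i=(h_{i+1/2}+h_{i-1/2})/2$, the linear piece gives exactly $\partial_x\sigma^{11}(x_i,y_{j+1/2})$, while the quadratic remainder produces a coefficient $(h_{i+1/2}^2-h_{i-1/2}^2)/(8h_i)$ multiplying $\partial_x^2\sigma^{11}(x_i,y_{j+1/2})$. The algebraic identity
\begin{equation*}
\frac{h_{i+1/2}^2-h_{i-1/2}^2}{8h_i}=\frac{h_{i+1/2}-h_{i-1/2}}{4}
\end{equation*}
then turns this into
\begin{equation*}
D_x\sigma^{11}_{i,j+1/2}=\frac{\partial\sigma^{11}}{\partial x}(x_i,y_{j+1/2})+\frac{h_{i+1/2}-h_{i-1/2}}{4}\frac{\partial^2\sigma^{11}}{\partial x^2}(x_i,y_{j+1/2})+O(h^2\norm{\sigma^{11}}_{3,\infty}),
\end{equation*}
where the regularity assumption \eqref{grid conditon} is used to bound the cubic Taylor remainder by $h^2$.

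Next I would treat $D_x\delta^{11}_{i,j+1/2}$ in the same way: expand $\partial_x^2\sigma^{11}$ and $\partial_y^2\sigma^{11}$ in the two values $\delta^{11}_{i\pm1/2,j+1/2}$ about $(x_i,y_{j+1/2})$. Because the $\partial_x^2$-piece of $\delta^{11}$ carries the coefficient $h_{i\pm1/2}^2/8$, the same algebraic identity reappears and yields a leading contribution $\tfrac14(h_{i+1/2}-h_{i-1/2})\partial_x^2\sigma^{11}(x_i,y_{j+1/2})$ to $D_x\delta^{11}_{i,j+1/2}$. The $\partial_y^2$-piece, prefactored by $l_{j+1/2}^2/8$ and then differenced in $x$, produces $(l_{j+1/2}^2/8)\partial_x\partial_y^2\sigma^{11}(x_i,y_{j+1/2})$ plus an $O(l^2 h\norm{\sigma^{11}}_{3,\infty})$ remainder, which is of size $O(l^2\norm{\sigma^{11}}_{3,\infty})$, while the higher-order Taylor remainders in $x$ give an additional $O(h^2\norm{\sigma^{11}}_{3,\infty})$ term.

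Subtracting $\tilde\sigma^{11}=\sigma^{11}-\delta^{11}$, the $O(h)$ error in $D_x\sigma^{11}$ is exactly cancelled by the leading term of $D_x\delta^{11}$, leaving $\epsilon^x_{i,j+1/2}(\sigma^{11})=O((h^2+l^2)\norm{\sigma^{11}}_{3,\infty})$; the second identity follows by swapping the roles of $x$ and $y$. The main obstacle is only bookkeeping: one has to carry Taylor expansions in both variables to high enough order and recognise that the factor $1/8$ in the definition of $\delta^{11}$ is chosen precisely to match the non-uniform-grid identity above, so that the first-order bias coming from unequal neighbouring cell widths is killed. Once this cancellation is identified, the remaining remainder bounds are routine.
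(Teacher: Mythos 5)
Your proposal is correct and is essentially the argument the paper intends: the paper gives no detailed proof, simply invoking Taylor expansion "similar to \cite{rui2017stability}", and your expansion with the identity $(h_{i+1/2}^2-h_{i-1/2}^2)/(8h_i)=(h_{i+1/2}-h_{i-1/2})/4$ cancelling the first-order bias against $D_x\delta^{11}$ is exactly that standard argument. The remainder bookkeeping (the $l_{j+1/2}^2$-term contributing only $O(l^2\norm{\sigma^{11}}_{3,\infty})$ and the cubic terms $O(h^2\norm{\sigma^{11}}_{3,\infty})$) is handled correctly, so nothing further is needed.
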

		
		Define the discrete interpolation of $u^x$ and $u^y$ in $\Omega$ and on the boundary $\partial \Omega$ as follows.
		\begin{align}
			&\begin{aligned}
				&\tilde{u}^x_{i,j+1/2}=u^x_{i,j+1/2}-\frac{l^2_{j+1/2}}{8}\frac{\partial^2u^x_{i,j+1/2}}{\partial y^2},&&0\leq i\leq n_x,0\leq j\leq n_y-1,\\
				&\tilde{u}^x_{i,j}=u^x_{i,j}-\frac{l^2_{j}}{8}\frac{\partial^2u^x_{i,j}}{\partial y^2},&&0\leq i\leq n_x,j=0,n_y,
				\label{equation4.8}
			\end{aligned}\\		
			&\begin{aligned}
				&\tilde{u}^y_{i+1/2,j}=u^y_{i+1/2,j}-\frac{h^2_{i+1/2}}{8}\frac{\partial^2u^y_{i+1/2,j}}{\partial x^2},&&0\leq i\leq n_x-1,0\leq j\leq n_y,\\
				&\tilde{u}^y_{i,j}=u^y_{i,j}-\frac{h^2_{i}}{8}\frac{\partial^2u^y_{i,j}}{\partial x^2},&&i=0,n_x,0\leq j\leq n_y.
				\label{equation4.9}
			\end{aligned}
		\end{align}

		Similarly to \cite{rui2017stability}, we have
		\begin{lemma}
			If $\boldsymbol{u}$ is sufficiently smooth then there hold
			\begin{equation}
				\begin{dcases}
					(\frac{\partial u^x}{\partial y})_{i,j}=D_y\tilde{u}^x_{i,j}+\epsilon^x_{i,j}(u^x),\quad i=0,\cdots,n_x,j=0,\cdots,n_y,\\
					(\frac{\partial u^y}{\partial x})_{i,j}=D_x\tilde{u}^y_{i,j}+\epsilon^y_{i,j}(u^y),\quad i=0,\cdots,n_x,j=0,\cdots,n_y,\\
					\begin{aligned}
						(\frac{\partial u^x}{\partial x})_{i+1/2,j+1/2}=d_x\tilde{u}^x_{i+1/2,j+1/2}&+\epsilon^x_{i+1/2,j+1/2}(u^x),\\
						i=&0,\cdots,n_x-1,j=0,\cdots,n_y-1,
					\end{aligned}\\
					\begin{aligned}
						(\frac{\partial u^y}{\partial x})_{i+1/2,j+1/2}=d_y\tilde{u}^y_{i+1/2,j+1/2}&+\epsilon^y_{i+1/2,j+1/2}(u^y),\\	
						i=&0,\cdots,n_x-1,j=0,\cdots,n_y-1.
					\end{aligned}
				\end{dcases}
			\end{equation}
			with the approximate properties  
			\begin{equation}
				\begin{dcases}
					\epsilon^x_{i,j}(u^x)=O((h^2+l^2)\norm{u^x}_{3,\infty}),&\epsilon^y_{i,j}(u^y)=O((h^2+l^2)\norm{u^y}_{3,\infty}),\\
					\epsilon^x_{i+1/2,j+1/2}(u^x)=O((h^2+l^2)\norm{u^x}_{3,\infty}),&\epsilon^y_{i+1/2,j+1/2}(u^y)=O((h^2+l^2)\norm{u^y}_{3,\infty}).
				\end{dcases}
			\end{equation}
			\label{lemma4.2}
		\end{lemma}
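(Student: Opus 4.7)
The plan is to establish each of the four identities by direct Taylor expansion about the appropriate reference point, exploiting the fact that the correction $-\tfrac{l^2}{8}\partial_y^2 u^x$ appearing in $\tilde u^x$ (respectively $-\tfrac{h^2}{8}\partial_x^2 u^y$ in $\tilde u^y$) is engineered precisely to kill the first-order consistency error that the wide difference $D_y$ (resp.\ $D_x$) otherwise incurs on a non-uniform mesh.

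First I would treat the first identity at an interior node $(x_i,y_j)$: Taylor-expand $u^x_{i,j\pm 1/2}$ around $(x_i,y_j)$ through cubic order, and $(\partial_y^2 u^x)_{i,j\pm 1/2}$ through linear order, then substitute into $D_y\tilde u^x_{i,j}=(\tilde u^x_{i,j+1/2}-\tilde u^x_{i,j-1/2})/l_j$. Using $l_j=\tfrac{1}{2}(l_{j+1/2}+l_{j-1/2})$, the leading contribution reproduces $(\partial_y u^x)_{i,j}$, while the quadratic Taylor contribution produces a spurious factor $\tfrac{l^2_{j+1/2}-l^2_{j-1/2}}{8 l_j}(\partial_y^2 u^x)_{i,j}$, which is of size $O(l)$ on a non-uniform grid. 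This is precisely the term cancelled by the $-\tfrac{l^2}{8}\partial_y^2 u^x$ correction in $\tilde u^x$ (the two occurrences at $j\pm 1/2$ combine to subtract exactly this amount, modulo higher order). What remains is an $O(l^3)$ residue coming from the cubic Taylor term and from the linear expansion of $\partial_y^2 u^x$, and this, divided by $l_j\gtrsim l$ under the mesh regularity assumption \eqref{grid conditon}, yields $O((h^2+l^2)\norm{u^x}_{3,\infty})$. The second identity follows by the symmetric argument with the roles of $x$ and $y$ interchanged.

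The third and fourth identities are structurally easier. At a cell center $(x_{i+1/2},y_{j+1/2})$, the short difference $d_x u^x$ is a symmetric two-point difference on the stencil $\{x_i,x_{i+1}\}$ with $x_{i+1/2}$ as the midpoint, hence is $O(h^2)$-accurate as an approximation to $\partial_x u^x$ regardless of non-uniformity elsewhere. The additional contribution coming from the correction is $-\tfrac{l^2_{j+1/2}}{8}d_x(\partial_y^2 u^x)_{i+1/2,j+1/2}$, which is bounded by $\tfrac{l^2_{j+1/2}}{8}\norm{\partial_x\partial_y^2 u^x}_\infty = O(l^2\norm{u^x}_{3,\infty})$; summing gives the required $O(h^2+l^2)$ bound. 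The last identity (which should read $(\partial u^y/\partial y)_{i+1/2,j+1/2}$ on the left-hand side) is entirely symmetric. Boundary nodes $j=0,n_y$ in the first identity and $i=0,n_x$ in the second are handled by one-sided Taylor expansions using the special definitions in \eqref{equation4.8}--\eqref{equation4.9}, following the procedure in \cite{rui2017stability}.

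The main obstacle is the cancellation of the $O(l)$ non-uniform-grid error in the wide difference: a plain centered wide difference of $u^x$ would fail to be second-order whenever $l_{j+1/2}\neq l_{j-1/2}$, and the role of $\tilde u^x$ is exactly to restore superconvergence. Verifying this cancellation to the correct order, and bookkeeping the remaining cubic-order terms so that dividing by $l_j$ still yields $O(l^2)$, is the essential calculation; everything else reduces to standard Taylor remainders bounded by $\norm{u^x}_{3,\infty}$ or $\norm{u^y}_{3,\infty}$.
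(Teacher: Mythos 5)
Your proposal is correct and follows essentially the same route as the paper, which proves Lemma \ref{lemma4.2} exactly by the Taylor-expansion argument of \cite{rui2017stability}: the quadratic Taylor term $\tfrac{l_{j+1/2}^2-l_{j-1/2}^2}{8}\partial_y^2u^x$ produced by the wide difference on a non-uniform mesh is cancelled by the $-\tfrac{l^2}{8}\partial_y^2$ corrections in $\tilde u^x,\tilde u^y$, while the short differences $d_x,d_y$ are centered at cell midpoints and need only the $O(l^2)$ (resp.\ $O(h^2)$) estimate of the differenced correction term. You also correctly note the typographical slip in the fourth identity ($\partial u^y/\partial y$ is intended), so no further comparison is needed.
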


		Now we consider the superconvergence analysis. Set
		\begin{equation}
			\begin{dcases}
				\begin{aligned}
					&E^{11}_{i+1/2,j+1/2}=(Z^{11}-\tilde{\sigma}^{11})_{i+1/2,j+1/2},&&i=0,1,\cdots,n_x-1,j=0,1,\cdots,n_y-1,\\
					&E^{22}_{i+1/2,j+1/2}=(Z^{22}-\tilde{\sigma}^{22})_{i+1/2,j+1/2},&&i=0,1,\cdots,n_x-1,j=0,1,\cdots,n_y-1,\\
					&E^{12}_{i,j}=(Z^{12}-\tilde{\sigma}^{12})_{i,j},&&i=0,\cdots,n_x,j=0,\cdots,n_y,\\
					&E^x_{i,j+1/2}=(W^x-\tilde{u}^x)_{i,j+1/2},&&i=1,\cdots,n_x-1,j=0,\cdots,n_y-1,\\
					&E^y_{i+1/2,j}=(W^y-\tilde{u}^y)_{i+1/2,j},&&i=0,\cdots,n_x-1,j=1,\cdots,n_y-1.\\
				\end{aligned}
			\end{dcases}
		\end{equation}
		
		Subtracting $\eqref{WXequation}$ from $\eqref{uxequation}$, we have
		\begin{equation}
			D_xE^{11}_{i,j+1/2}+d_yE^{12}_{i,j+1/2}=(\frac{\partial \sigma^{11}}{\partial x})_{i,j+1/2}-D_xZ^{11}_{i,j+1/2}+(\frac{\partial \sigma^{12}}{\partial y})_{i,j+1/2}-d_yZ^{12}_{i,j+1/2},
		\end{equation}
		From Lemma $\ref{lemma 4.1}$ and Taylor expansion,we could get
		\begin{equation}
			D_xE^{11}_{i,j+1/2}+d_yE^{12}_{i,j+1/2}=\epsilon^x_{i,j+1/2}(\sigma^{11})+\epsilon^{2y}_{i,j+1/2}(\sigma^{12}),\label{WXwuchaequation}
		\end{equation}
		Similarly we have
		\begin{equation}
			D_yE^{22}_{i+1/2,j}+d_x\tilde{E}^{12}_{i+1/2,j}=\epsilon^y_{i+1/2,j}(\sigma^{22})+\epsilon^{2x}_{i+1/2,j}(\sigma^{12}),\label{WYwuchaequation}
		\end{equation}
		where
		\begin{equation*}
			\begin{dcases}
				\epsilon^{2x}_{i+1/2,j}(\sigma^{12})=O((h^2+l^2)\norm{\sigma^{12}}_{3,\infty}),\\
				\epsilon^{2y}_{i,j+1/2}(\sigma^{12})=O((h^2+l^2)\norm{\sigma^{12}}_{3,\infty}).
			\end{dcases}
		\end{equation*}
		We set
		\begin{equation}
			\begin{dcases}
				r^x_{i,j+1/2}=\epsilon^x_{i,j+1/2}(\sigma^{11})+\epsilon^{2y}_{i,j+1/2}(\sigma^{12})=O((h^2+l^2)(\norm{\sigma^{11}}_{3,\infty}+\norm{\sigma^{12}}_{3,\infty})),\\
				r^y_{i+1/2,j}=\epsilon^y_{i+1/2,j}(\sigma^{22})+\epsilon^{2x}_{i+1/2,j}(\sigma^{12})=O((h^2+l^2)(\norm{\sigma^{22}}_{3,\infty}+\norm{\sigma^{12}}_{3,\infty})).
			\end{dcases}
		\end{equation}
		
		For the discrete function $\{v^x_{i,j+1/2}\}$ with $v^x_{i,j+1/2}|_{\partial \Omega}=0$,  $\eqref{WXwuchaequation}$ times $v^x_{i,j+1/2}h_il_{j+1/2}$ and summing $i,j$ for $1\leq i\leq n_x-1,0\leq j\leq n_y-1$, we can get
		\begin{equation}
			(D_xE^{11},v^x)_{TM}+(d_yE^{12},v^x)_{TM}=(r^x,v^x)_{TM}.\label{equation4.18}
		\end{equation}
			
		Similarly,	For the discrete function $\{v^y_{i+1/2,j}\}$ with $v^y_{i+1/2,j}|_{\partial \Omega}=0$,we have
		\begin{equation}
			(D_yE^{22},v^y)_{MT}+(d_xE^{12},v^y)_{MT}=(r^y,v^y)_{MT}.\label{equation4.19}
		\end{equation}
		Additionally,combining Lemma $\ref{lemma3.2}$ we have 
		\begin{align}
			(D_xE^{11},v^x)_{TM}+(d_yE^{12},v^x)_{TM}=-(E^{11},d_xv^x)_M-(E^{12},D_yv^x)_T,\label{equation4.20}\\
			(D_yE^{22},v^y)_{MT}+(d_xE^{12},v^y)_{MT}=-(E^{22},d_yv^x)_M-(E^{12},D_xv^x)_T.\label{equation4.21}
		\end{align}
		
		Subtracting $\eqref{sigma12equation}$ from $\eqref{Z12equation}$, and combining the Lemma $\ref{lemma 4.1}$ we have
		\begin{equation}
		 	\frac{1}{\mu}E^{12}_{i,j}-(D_yE^x_{i,j}+D_xE^y_{i,j})=\epsilon^x_{i,j}(u^x)+\epsilon^y_{i,j}(u^y).\label{equation4.22}
	    \end{equation}
	    We set
	    \begin{equation}
	    	r^{12}_{i,j}=\epsilon^x_{i,j}(u^x)+\epsilon^y_{i,j}(u^y)=O((h^2+l^2)(\norm{u^x}_{3,\infty}+\norm{u^y}_{3,\infty})).
	    \end{equation}
	    
	    Multiplying $\eqref{equation4.22}$ by $\tau^{12}_{i,j}h_il_j$ with $\tau^{12}_{i,j}|_{\partial \Omega}=0$,making summation for $i=1,2,\cdots,n_x-1,j=1,2,\cdots,n_y-1$, and combining the Lemma $\ref{lemma3.2}$ we have
	    \begin{equation}
	    	\begin{aligned}
	    		&\frac{1}{\mu}(E^{12},\tau^{12})_T-(D_yE^x,\tau^{12})_T-(D_xE^y,\tau^{12})_T\\
	    		=&\frac{1}{\mu}(E^{12},\tau^{12})_T+(E^x,d_y\tau^{12})_{TM}+(E^y,d_x\tau^{12})_{MT}=(r^{12},\tau^{12})_T.
	    	\end{aligned}\label{equation4.24}
	    \end{equation}
	    
	    Subtracting $\eqref{sigma11equation}$ from $\eqref{Z11equation}$, combining Lemma $\ref{lemma 4.1}$ and Lemma $\ref{lemma4.2}$ we have 
	    \begin{equation}
	    	\begin{aligned}
	    		&\frac{1}{2\mu}((1-\frac{\lambda}{2\lambda+2\mu})E^{11}_{i+1/2,j+1/2}-\frac{\lambda}{2\lambda+2\mu}E^{22}_{i+1/2,j+1/2}))-d_xE^x_{i+1/2,j+1/2}\\
	    		=&r^{11}_{i+1/2,j+1/2}=O((h^2+l^2)\norm{u^x}_{3,\infty}).
	    	\end{aligned}
	    	\label{E11wcequation}
	    \end{equation}
	    Multiplying $\eqref{E11wcequation}$ with $\tau^{11}_{i+1/2,j+1/2}h_{i+1/2}l_{j+1/2}$, and making summation, we can obtain
	    \begin{equation}
	    	\begin{aligned}
	    		&\frac{1}{2\mu}((1-\frac{\lambda}{2\lambda+2\mu})(E^{11},\tau^{11})_M-\frac{\lambda}{2\lambda+2\mu}(E^{22},\tau^{11})_M)-(d_xE^x,\tau^{11}	)_M\\
	    		=&\frac{1}{2\mu}(\frac{\lambda+2\mu}{2\lambda+2\mu}(E^{11},\tau^{11})_M-\frac{\lambda}{2\lambda+2\mu}(E^{22},\tau^{11})_M)+(E^x,D_x\tau^{11}	)_M\\
	    		=&(r^{11},E^{11})_M.
	    	\end{aligned}\label{equation4.26}
	    \end{equation}
	    
	    Subtracting $\eqref{sigma22equation}$ from $\eqref{Z22equation}$, combining $Lemma\,\ref{lemma 4.1}$ and $Lemma\,\ref{lemma4.2}$, we have
		\begin{equation}
			\begin{aligned}
				&\frac{1}{2\mu}((1-\frac{\lambda}{2\lambda+2\mu})E^{22}_{i+1/2,j+1/2}-\frac{\lambda}{2\lambda+2\mu}E^{11}_{i+1/2,j+1/2}))-d_yE^y_{i+1/2,j+1/2}\\
				=&r^{22}_{i+1/2,j+1/2}=O((h^2+l^2)\norm{u^y}_{3,\infty}).
			\end{aligned}
			\label{E22wcequation}
		\end{equation}Multiplying $\eqref{E22wcequation}$ with $\tau^{22}_{i+1/2,j+1/2}h_{i+1/2}l_{j+1/2}$, and making summation, we can obtain
	    \begin{equation}
	    	\begin{aligned}
	    		&\frac{1}{2\mu}((1-\frac{\lambda}{2\lambda+2\mu})(E^{22},\tau^{22})_M-\frac{\lambda}{2\lambda+2\mu}(E^{11},\tau^{22})_M))-(d_yE^y,\tau^{11}	)_M\\
	    		=&\frac{1}{2\mu}(\frac{\lambda+2\mu}{2\lambda+2\mu}(E^{22},\tau^{22})_M-\frac{\lambda}{2\lambda+2\mu}(E^{11},\tau^{22})_M)+(E^y,D_y\tau^{22}	)_M\\
	    		=&(r^{22},E^{22})_M.
	    	\end{aligned}\label{equation4.28}
	    \end{equation}

		Adding $\eqref{equation4.24},\eqref{equation4.26}$ and $\eqref{equation4.28}$ and combining $\eqref{equation3.29}$ results in
		\begin{equation}
			\begin{aligned}
				&-(E^x,D_x\tau^{11}	)_M-(E^y,D_y\tau^{22}	)_M-(E^x,d_y\tau^{12})_{TM}-(E^y,d_x\tau^{12})_{MT}\\
				=&\frac{1}{\mu}(E^{12},\tau^{12})_T-\frac{1}{2\mu}\frac{\lambda}{2\lambda+2\mu}((E^{11},\tau^{22})_M+(E^{22},\tau^{11})_M)\\
				&+\frac{1}{2\mu}\frac{\lambda+2\mu}{2\lambda+2\mu}((E^{11},\tau^{11})_M+(E^{22},\tau^{22})_M)-(r^{12},\tau^{12})_T-(r^{11},\tau^{11})_M-(r^{22},\tau^{22})_M\\
				\leq&\frac{1}{2\mu}\norm{\underline{E}}\norm{\underline{\tau}}+C\norm{\underline{r}}\norm{\underline{\tau}}.
			\label{equation4.29}
		\end{aligned}\\
		\end{equation}
		
		Using $\eqref{equation4.29}$ and Lemma $\ref{lemma3.1}$ we obtain that
		\begin{equation}
			\begin{aligned}
				\norm{\boldsymbol{E}}&\leq
				\sup_{0\neq\underline{\tau}\in\underline{\Sigma}_h}-\frac{(E^x,D_x\tau^{11}	)_M+(E^y,D_y\tau^{22}	)_M+(E^x,d_y\tau^{12})_{TM}+(E^y,d_x\tau^{12})_{MT}}{\norm{\underline{\tau}}}\\
				&\leq\frac{1}{2\mu}\norm{\underline{E}}+C\norm{\underline{r}}.
			\end{aligned}
			\label{equation4.30}
		\end{equation}
		
		Setting $\tau^{11}_{i+1/2,j+1/2}=E^{11}_{i+1/2,j+1/2},\tau^{22}_{i+1/2,j+1/2}=E^{22}_{i+1/2,j+1/2},\tau^{12}_{i,j}=E^{12}_{i,j}$ in $\eqref{equation4.29}$, we obtain that
		\begin{equation}
			\begin{aligned}
				&\frac{1}{\mu}\norm{E^{12}}_T^2-\frac{1}{\mu}\frac{\lambda}{2\lambda+2\mu}(E^{11},E^{22})_M+\frac{1}{2\mu}\frac{\lambda+2\mu}{2\lambda+2\mu}(\norm{E^{11}}_M^2+\norm{E^{22}}_M^2)\\
				=&-(E^x,D_xE^{11})_M-(E^y,D_yE^{22}	)_M-(E^x,d_yE^{12})_{TM}-(E^y,d_xE^{12})_{MT}\\
				&+(r^{12},E^{12})_T+(r^{11},E^{11})_M+(r^{22},E^{22})_M.
			\end{aligned}
			\label{equation4.31}
		\end{equation}
		On the left-hand side of the $\eqref{equation4.31}$, we have
		\begin{equation}
			\begin{aligned}
				&\frac{1}{\mu}\norm{E^{12}}_T^2-\frac{1}{\mu}\frac{\lambda}{2\lambda+2\mu}(E^{11},E^{22})_M+\frac{1}{2\mu}(1-\frac{\lambda}{2\lambda+2\mu})(\norm{E^{11}}_M^2+\norm{E^{22}}_M^2)\\
				\geq&\frac{1}{\mu}\norm{E^{12}}_T^2+\frac{1}{2\mu}(1-\frac{\lambda}{2\lambda+2\mu})(\norm{E^{11}}_M^2+\norm{E^{22}}_M^2)-\frac{1}{2\mu}\frac{\lambda}{2\lambda+2\mu}(\norm{E^{11}}_M^2+\norm{E^{22}}_M^2)\\
				=&\frac{1}{\mu}\norm{E^{12}}_T^2+\frac{1}{2\mu}(1-\frac{\lambda}{\lambda+\mu})(\norm{E^{11}}_M^2+\norm{E^{22}}_M^2)\\
				=&\frac{1}{\mu}\norm{E^{12}}_T^2+\frac{1}{2\lambda+2\mu}(\norm{E^{11}}_M^2+\norm{E^{22}}_M^2).
			\end{aligned}
			\label{equation4.32}
		\end{equation}
		For $\eqref{equation4.18},\eqref{equation4.19}$,we set $v^x_{i,j+1/2}=E^x_{i,j+1/2},v^y_{i+1/2,j}=E^y_{i+1/2,j}$
		and add them, Then we could obtain that
		\begin{equation}
			\begin{aligned}
				&-(E^x,D_xE^{11})_{TM}-(E^y,D_yE^{22}	)_{MT}-(E^x,d_yE^{12})_{TM}-(E^y,d_xE^{12})_{MT}\\
				=&-(r^x,E^x)_{TM}-(r^y,E^y)_{MT}.
			\end{aligned}
		\end{equation}
		On the right-hand side of the $\eqref{equation4.31}$,combining $\eqref{WXwuchaequation},\eqref{WYwuchaequation}$ we have
		\begin{equation}
			\begin{aligned}
				&-(E^x,D_xE^{11})_M-(E^y,D_yE^{22}	)_M-(E^x,d_yE^{12})_{TM}-(E^y,d_xE^{12})_{MT}\\
				&+(r^{12},E^{12})_T+(r^{11},E^{11})_M+(r^{22},E^{22})_M\\
				=&-(r^x,E^x)_{TM}-(r^y,E^y)_{MT}+(r^{12},E^{12})_T+(r^{11},E^{11})_M+(r^{22},E^{22})_M\\
				\leq&\frac{1}{C_5}\norm{\boldsymbol{E}}^2+\frac{1}{C_6}\norm{\underline{E}}^2+\frac{C_5}{4}\norm{\boldsymbol{r}}^2+\frac{C_6}{4}\norm{\underline{r}}^2.
			\end{aligned}
			\label{equation4.34}
		\end{equation}
		So we have
		\begin{equation}
			\begin{aligned}
				&\frac{1}{\mu}\norm{E^{12}}_T^2+\frac{1}{2\lambda+2\mu}(\norm{E^{11}}_M^2+\norm{E^{22}}_M^2)\\
				\leq&\frac{1}{2\mu C_5}\norm{\underline{E}}^2+\frac{C}{C_6}\norm{r}^2+\frac{1}{C_4}\norm{\underline{E}}^2+\frac{C_5}{4}\norm{\boldsymbol{r}}^2+\frac{C_6}{4}\norm{\underline{r}}^2.
			\end{aligned}\label{equation4.35}	
		\end{equation}
		Choosing  and combining $\eqref{equation4.30},\eqref{equation4.32}$ and $\eqref{equation4.34}$, we obtain the following theorem
		\begin{theorem}
			Suppose the analytical solutions $\underline{\sigma}=\begin{pmatrix}
				\sigma^{11} & \sigma^{12}\\
				\sigma^{12} & \sigma^{22}
			\end{pmatrix}$ and $\boldsymbol{u}=(u^x,u^y)$ are sufficiently smooth. $\tilde{\sigma}^{11},\tilde{\sigma}^{22},\tilde{\sigma}^{12}$ are defined by $\eqref{equation4.1}-\eqref{equation4.3}$. $\tilde{u}^{x},\tilde{u}^{y}$ are defined by $\eqref{equation4.8}-\eqref{equation4.9}$. $W^x,W^y,Z^{11},Z^{12},Z^{22}$ are defined by $\eqref{Z11equation}-\eqref{WYequation}$. When h and l are sufficiently small there is a positive constant C independent of h and l such that
			\begin{align}
				&\norm{W^x-\tilde{u}^x}_{TM}+\norm{W^y-\tilde{u}^y}_{MT}\leq C(h^2+l^2)(\norm{\boldsymbol{u}}_{3,\infty}+\norm{\underline{\sigma}}_{3,\infty}),\\
				&\norm{Z^{12}-\tilde{\sigma}^{12}}_M\leq C\mu(h^2+l^2)(\norm{\boldsymbol{u}}_{3,\infty}+\norm{\underline{\sigma}}_{3,\infty}),\\
				&\norm{Z^{11}-\tilde{\sigma}^{11}}_M+\norm{Z^{22}-\tilde{\sigma}^{22}}_M\leq C(\lambda+\mu)(h^2+l^2)(\norm{\boldsymbol{u}}_{3,\infty}+\norm{\underline{\sigma}}_{3,\infty}).
			\end{align}
			\label{theorem 4.3}
		\end{theorem}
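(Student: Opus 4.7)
My plan is to close the error system by combining the two inequalities already set up just above the theorem: the energy bound (4.35), obtained by testing the stress error identity (4.31) with $\underline\tau=\underline E$, and the LBB/duality bound (4.30) for $\norm{\boldsymbol E}$. A careful choice of the Young-inequality parameters $C_5,C_6$, followed by an absorption step, should deliver a clean bound on the weighted quantity $\tfrac{1}{\mu}\norm{E^{12}}_T^2 + \tfrac{1}{2\lambda+2\mu}(\norm{E^{11}}_M^2+\norm{E^{22}}_M^2)$, from which the three componentwise estimates can be read off.

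First, I would test (4.31) with $\underline\tau=\underline E$, bound the LHS below by the coercivity inequality (4.32), and simplify the displacement--stress cross terms on the RHS to $-(r^x,E^x)_{TM}-(r^y,E^y)_{MT}$ via (4.33) and the integration-by-parts identities (4.20)--(4.21). Applying Young's inequality to the right-hand side gives (4.35). Second, I would square the LBB bound (4.30) to obtain $\norm{\boldsymbol E}^2 \leq \tfrac{1}{2\mu^2}\norm{\underline E}^2 + C\norm{\underline r}^2$, and substitute into (4.35) to get an inequality of the form
\begin{equation*}
  \tfrac{1}{\mu}\norm{E^{12}}_T^2 + \tfrac{1}{2\lambda+2\mu}\bigl(\norm{E^{11}}_M^2+\norm{E^{22}}_M^2\bigr)
  \;\leq\; \Bigl(\tfrac{1}{2\mu^2 C_5}+\tfrac{1}{C_6}\Bigr)\norm{\underline E}^2 + C_r(h^2+l^2)^2,
\end{equation*}
where $C_r$ depends on $C_5,C_6,\mu$ and on $\norm{\boldsymbol u}_{3,\infty},\norm{\underline\sigma}_{3,\infty}$ through $\norm{\boldsymbol r}$ and $\norm{\underline r}$.

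Third, I would choose $C_5\sim (\lambda+\mu)/\mu$ and $C_6\sim \lambda+\mu$ so that the coefficient of $\norm{\underline E}^2$ on the right is at most $\tfrac{1}{2(2\lambda+2\mu)}$. Using the elementary majorization $\norm{\underline E}^2 \leq (2\lambda+2\mu)\bigl[\tfrac{1}{\mu}\norm{E^{12}}_T^2+\tfrac{1}{2\lambda+2\mu}(\norm{E^{11}}_M^2+\norm{E^{22}}_M^2)\bigr]$ (which holds since $\lambda,\mu>0$), the $\norm{\underline E}^2$ term is absorbed into the LHS, leaving
\begin{equation*}
  \tfrac{1}{\mu}\norm{E^{12}}_T^2 + \tfrac{1}{2\lambda+2\mu}\bigl(\norm{E^{11}}_M^2+\norm{E^{22}}_M^2\bigr)
  \;\leq\; C(\lambda+\mu)(h^2+l^2)^2.
\end{equation*}
Reading off the two summands separately produces the two stress bounds of the theorem, and reinserting them into (4.30) delivers the displacement bound.

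The main obstacle is tracking the $\lambda$- and $\mu$-dependence through the absorption so that the prefactors $\mu$ (for $E^{12}$) and $\lambda+\mu$ (for $E^{11},E^{22}$) come out as stated, rather than square-root versions of them. A secondary subtle point is the displacement estimate: a direct substitution of the stress bounds into (4.30) produces an unwanted $(\lambda+\mu)/\mu$ factor. Eliminating it requires exploiting the specific LBB test function from the proof of Lemma \ref{lemma3.1} (with $\tau^{12}=\tau^{21}=0$), so that only the weighted norm of the normal stress error enters (4.29), together with the refined coefficients $\tfrac{1}{4\mu}\cdot\tfrac{\lambda+2\mu}{\lambda+\mu}$ and $\tfrac{1}{4\mu}\cdot\tfrac{\lambda}{\lambda+\mu}$ coming from $a_h$ in that test.
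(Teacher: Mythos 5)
Your overall route---derive \eqref{equation4.35} by testing the error identity \eqref{equation4.31} with $\underline{\tau}=\underline{E}$, control $\norm{\boldsymbol{E}}$ through the LBB bound \eqref{equation4.30}, and then fix the Young parameters $C_5,C_6$ so that the $\norm{\underline{E}}^2$ terms are absorbed---is exactly the paper's strategy; the paper's entire proof of Theorem \ref{theorem 4.3} consists of such parameter choices in \eqref{equation4.35}. The genuine gap is in how you conclude. You make a single choice $C_5\sim(\lambda+\mu)/\mu$, $C_6\sim\lambda+\mu$ and then ``read off the two summands.'' That does give the normal-stress estimate, but for the shear component it only gives $\frac{1}{\mu}\norm{E^{12}}_T^2\le C(\lambda+\mu)(h^2+l^2)^2$, i.e.\ $\norm{E^{12}}_T\le C\sqrt{\mu(\lambda+\mu)}\,(h^2+l^2)$, which is not the claimed $\lambda$-uniform bound $\norm{Z^{12}-\tilde{\sigma}^{12}}\le C\mu(h^2+l^2)$. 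You flag this yourself as ``the main obstacle,'' but the proposal does not close it: the suggested remedy (re-using the particular LBB test function with $\tau^{12}=\tau^{21}=0$ together with ``refined coefficients'' of $a_h$) is only a gesture, with no estimate actually carried out, and it is not what the paper does. The paper instead returns to \eqref{equation4.35} a second time with a different pair of parameters, $C_5=2$, $C_6=4\mu$, to extract the $E^{12}$ bound, reserving $C_5=2\lambda+2\mu$, $C_6=4\lambda+4\mu$ for $E^{11},E^{22}$; it never tries to obtain both stress bounds from one absorbed inequality, precisely because a single $\lambda$-scaled choice produces the square-root degradation you observed.

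The same remark applies to the displacement estimate: as you note, inserting the stress bounds into \eqref{equation4.30} produces an extra factor of order $(\lambda+\mu)/\mu$, and your proposal leaves the removal of that factor to the same unexecuted refinement. So the two assertions that carry the locking-free content of the theorem (the $\mu$-only shear bound and the displacement bound) are exactly the ones your argument does not establish. It is fair to say the paper's own treatment of this point is very terse---with $C_6=4\mu$ the normal-stress part of $\norm{\underline{E}}^2$ on the right of \eqref{equation4.35} is not absorbable by the $\frac{1}{2\lambda+2\mu}$-weighted left-hand side either, so a fully rigorous tracking of the $\lambda$-dependence requires more care than either your sketch or the paper's two-line proof spells out---but as a proof of Theorem \ref{theorem 4.3} your proposal stops short exactly where the new content lies.
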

		\begin{proof}
			Choose $C_5=2,C_6=4\mu$ in $\eqref{equation4.35}$,we could obtain that 
			\begin{equation}
				\norm{Z^{12}-\tilde{\sigma}^{12}}_M\leq C\mu(h^2+l^2)(\norm{\boldsymbol{u}}_{3,\infty}+\norm{\underline{\sigma}}_{3,\infty}).
			\end{equation}
			
			Similar, choose $C_5=2\lambda+2\mu,C_6=4\lambda+4\mu$ in $\eqref{equation4.35}$,we could obtain that 
			\begin{equation}
				\norm{Z^{11}-\tilde{\sigma}^{11}}_M+\norm{Z^{22}-\tilde{\sigma}^{22}}_M\leq C(\lambda+\mu)(h^2+l^2)(\norm{\boldsymbol{u}}_{3,\infty}+\norm{\underline{\sigma}}_{3,\infty}).
			\end{equation}
			The proof is completed.
		\end{proof}
		
		According to the definition of $\tilde{u}^x,\tilde{u}^y$ and $\delta$ we can obtain the following theorem.
		\begin{theorem}
			\label{theorem4.4}
			Under the condition of $Theorem\,\ref{theorem 4.3}$, there exists a positive constant C independent of h, l and $\lambda$ such that
			\begin{align}
				&\norm{W^x-u^x}_{TM}\leq C(h^2+l^2)(\norm{\boldsymbol{u}}_{3,\infty}+\norm{\underline{\sigma}}_{3,\infty}),\\
				&\norm{W^y-u^y}_{MT}\leq C(h^2+l^2)(\norm{\boldsymbol{u}}_{3,\infty}+\norm{\underline{\sigma}}_{3,\infty}),\\
				&\norm{Z^{12}-\sigma^{12}}_{T}\leq C(\mu)(h^2+l^2)(\norm{\boldsymbol{u}}_{3,\infty}+\norm{\underline{\sigma}}_{3,\infty}),\\
				&\norm{Z^{11}-\sigma^{11}}_{M}\leq C(\lambda+\mu)(h^2+l^2)(\norm{\boldsymbol{u}}_{3,\infty}+\norm{\underline{\sigma}}_{3,\infty}),\\
				&\norm{Z^{22}-\sigma^{22}}_{M}\leq C(\lambda+\mu)(h^2+l^2)(\norm{\boldsymbol{u}}_{3,\infty}+\norm{\underline{\sigma}}_{3,\infty}).
			\end{align}
		\end{theorem}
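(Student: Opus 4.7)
The plan is to deduce Theorem \ref{theorem4.4} directly from Theorem \ref{theorem 4.3} by estimating the interpolation errors $\tilde{u}^x-u^x$, $\tilde{u}^y-u^y$, $\tilde{\sigma}^{11}-\sigma^{11}$, $\tilde{\sigma}^{22}-\sigma^{22}$, and $\tilde{\sigma}^{12}-\sigma^{12}$ in the corresponding discrete $L^2$ norms, and then applying the triangle inequality. Since Theorem \ref{theorem 4.3} already delivers the superconvergent bounds between the scheme's output and the tailored interpolants, only the discrepancy between the interpolants and the exact solutions remains to be controlled.

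First I would inspect the definitions $\eqref{equation4.8}$ and $\eqref{equation4.9}$: by construction, $\tilde{u}^x_{i,j+1/2}-u^x_{i,j+1/2}=-\tfrac{l_{j+1/2}^2}{8}\,\partial_{yy}u^x_{i,j+1/2}$, so pointwise this difference is $O(l^2\,\norm{u^x}_{2,\infty})\subseteq O((h^2+l^2)\,\norm{\boldsymbol{u}}_{3,\infty})$. Summing in the $(\cdot,\cdot)_{TM}$ norm against the bounded weights $h_i l_{j+1/2}$ over a domain of fixed measure gives $\norm{\tilde{u}^x-u^x}_{TM}\le C(h^2+l^2)\norm{\boldsymbol{u}}_{3,\infty}$, and symmetrically $\norm{\tilde{u}^y-u^y}_{MT}\le C(h^2+l^2)\norm{\boldsymbol{u}}_{3,\infty}$. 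A triangle inequality with the first estimate of Theorem \ref{theorem 4.3} then yields the displacement bounds, with a constant independent of $\lambda$, because both contributions are $\lambda$-independent.

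Next, for the normal stresses, definitions $\eqref{equation4.1}$, $\eqref{equation4.2}$, $\eqref{equation4.4}$, $\eqref{equation4.5}$ give $\tilde{\sigma}^{kk}-\sigma^{kk}=-\delta^{kk}$ where $\delta^{kk}=O(h^2+l^2)\norm{\sigma^{kk}}_{2,\infty}$. Hence $\norm{\tilde{\sigma}^{kk}-\sigma^{kk}}_M\le C(h^2+l^2)\norm{\underline{\sigma}}_{3,\infty}$ for $k=1,2$, and combining with the $\lambda$-dependent bound from Theorem \ref{theorem 4.3} gives the stated estimates for $\norm{Z^{11}-\sigma^{11}}_M$ and $\norm{Z^{22}-\sigma^{22}}_M$. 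For the shear stress, $\tilde{\sigma}^{12}=\sigma^{12}$ by $\eqref{equation4.3}$, so there is no interpolation error and $\norm{Z^{12}-\sigma^{12}}_T=\norm{Z^{12}-\tilde{\sigma}^{12}}_T\le C(\mu)(h^2+l^2)(\norm{\boldsymbol{u}}_{3,\infty}+\norm{\underline{\sigma}}_{3,\infty})$ follows immediately.

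The argument involves essentially no obstacle beyond bookkeeping: the only nontrivial ingredient, namely the superconvergence against the tailored interpolants, is already established. The mild point to watch is that the interpolation corrections $\tfrac{l_{j+1/2}^2}{8}\partial_{yy}u^x$ and $\delta^{kk}$ contain no dependence on $\lambda$, which is what preserves the $\lambda$-independence of the displacement bound and guarantees that the $\lambda$-dependent constants in Theorem \ref{theorem4.4} come entirely from the Theorem \ref{theorem 4.3} contributions and not from the interpolation step.
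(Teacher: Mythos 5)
Your proposal is correct and is essentially the paper's own argument: the paper deduces Theorem \ref{theorem4.4} from Theorem \ref{theorem 4.3} precisely "according to the definition of $\tilde{u}^x,\tilde{u}^y$ and $\delta$", i.e.\ by noting that the interpolation corrections in \eqref{equation4.1}--\eqref{equation4.5} and \eqref{equation4.8}--\eqref{equation4.9} are pointwise $O(h^2+l^2)$ multiples of second derivatives (and vanish for $\sigma^{12}$), then applying the triangle inequality in the discrete norms. Your observation that these corrections carry no $\lambda$-dependence, so all $\lambda$-dependence enters only through Theorem \ref{theorem 4.3}, matches the paper's conclusion.
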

		
		\section{Extension to three dimensional problems}Our techniques can be extended to three dimensional problems easily. We explain how to extend it.
		
		For simplicity we suppose the three dimensional domain is $\Omega=[0,a]\times[0,b]\times[0,c]$ and is partitioned by $\delta_x\times \delta_y\times \delta_z$, where $\delta_x$ and  $\delta_y$ is the same as in the two dimensional case and 
			\begin{align*}
				\delta_z : 0 = z_0 < z_1 < \cdots < z_{n_z - 1} < z_{n_z} = c.
			\end{align*} 
		In the z-direction we further use the following notations:$z_{-1/2}=0$, $ z_{n_z+1/2}=c$,and
		\begin{equation*}
			z_{i+1/2}=\frac{z_i+z_{i+1}}{2},
		\end{equation*}	
		The $\underline{\sigma}$ is 
		\begin{equation*}
			\begin{pmatrix}
				\sigma^{11} & \sigma^{12} &\sigma^{13}\\
				\sigma^{21} & \sigma^{22} &\sigma^{23}\\
				\sigma^{31} & \sigma^{32} &\sigma^{33}
			\end{pmatrix},
		\end{equation*} 
		and we have $\underline{\sigma}=\underline{\sigma}^T$.
		
		There three dimensional model problem is
		\begin{align}
				&\dfrac{1}{2\mu}\big(\sigma^{11} - 
				\dfrac{\lambda}{3\lambda + 2\mu}tr\underline{\sigma}\big)
				=\frac{\partial u^x}{\partial x},\quad(x,y,z)\in \Omega, \notag\\
				&\dfrac{1}{2\mu}\big(\sigma^{22} - 
				\dfrac{\lambda}{3\lambda + 2\mu}tr\underline{\sigma}\big)
				=\frac{\partial u^y}{\partial y},\quad(x,y,z)\in \Omega, \notag\\
				&\dfrac{1}{2\mu}\big(\sigma^{33} - 
				\dfrac{\lambda}{3\lambda + 2\mu}tr\underline{\sigma}\big)
				=\frac{\partial u^z}{\partial z},\quad(x,y,z)\in \Omega, \notag\\
				&\sigma^{12}=\mu(\frac{\partial u^x}{\partial y}+\frac{\partial u^y}{\partial x}),\quad\quad\quad\quad\quad(x,y,z)\in \Omega, \notag\\
				&\sigma^{13}=\mu(\frac{\partial u^x}{\partial z}+\frac{\partial u^z}{\partial x}),\quad\quad\quad\quad\quad(x,y,z)\in \Omega,\\
				&\sigma^{23}=\mu(\frac{\partial u^y}{\partial z}+\frac{\partial u^z}{\partial y}),\quad\quad\quad\quad\quad(x,y,z)\in \Omega ,\notag\\
				&\frac{\partial \sigma^{11}}{\partial x} +\frac{\partial \sigma^{12}}{\partial y}+\frac{\partial \sigma^{13}}{\partial z}= f^1,\quad\quad\quad(x,y,z)\in \Omega ,\notag\\
				&\frac{\partial \sigma^{12}}{\partial x} +\frac{\partial \sigma^{22}}{\partial y}+\frac{\partial \sigma^{23}}{\partial z}= f^2,\quad\quad\quad(x,y,z)\in \Omega ,\notag\\
				&\frac{\partial \sigma^{13}}{\partial x} +\frac{\partial \sigma^{23}}{\partial y}+\frac{\partial \sigma^{33}}{\partial z}= f^3,\quad\quad\quad(x,y,z)\in \Omega .\notag
		\end{align}
		
		Denote by $\{W^x_{i,j+1/2,k+1/2}\}$,$\{W^y_{i+1/2,j,k+1/2}\}$,$\{W^z_{i+1/2,j+1/2,k}\}$,$\{Z^{11}_{i+1/2,j+1/2,k+1/2}\}$,$\{Z^{22}_{i+1/2,j+1/2,k+1/2}\}$,
		$\{Z^{33}_{i+1/2,j+1/2,k+1/2}\}$,$\{Z^{12}_{i,j,k}\}$,$\{Z^{13}_{i,j,k}\}$ and $\{Z^{23}_{i,j,k}\}$,respectively,the MAC-E approximations to $\{u^x_{i,j+1/2,k+1/2}\}$,
		$\{u^y_{i+1/2,j,k+1/2}\}$,$\{u^z_{i+1/2,j+1/2,k}\}$,$\{\sigma^{11}_{i+1/2,j+1/2,k+1/2}\}$,$\{\sigma^{22}_{i+1/2,j+1/2,k+1/2}\}$,
		$\{\sigma^{33}_{i+1/2,j+1/2,k+1/2}\}$,$\{\sigma^{12}_{i,j,k+1/2}\}$ and $\{\sigma^{21}_{i,j,k+1/2}\}$,$\{Z^{13}_{i,j+1/2,k}\}$ and $\{Z^{31}_{i,j+1/2,k}\}$,$\{Z^{23}_{i+1/2,j,k}\}$ and $\{Z^{32}_{i+1/2,j,k}\}$
		They should satisfy the boundary value conditions and the following equations:
		
		\begin{align}
				&\begin{aligned}
					\dfrac{1}{2\mu}\big(Z^{11} - \dfrac{\lambda}{3\lambda + 2\mu}(Z^{11}+Z^{22}+Z^{33}&)\big)_{i+1/2,j+1/2,k+1/2}
					-d_xW^x_{i+1/2,j+1/2,k+1/2}=0,\\
					&(0,0,0)\leq(i,j,k)\leq (n_x-1,n_y-1,n_z-1),\notag
				\end{aligned}\\
				&\begin{aligned}
					\dfrac{1}{2\mu}\big(Z^{22} - \dfrac{\lambda}{3\lambda + 2\mu}(Z^{11}+Z^{22}+Z^{33}&)\big)_{i+1/2,j+1/2,k+1/2}
					-d_yW^y_{i+1/2,j+1/2,k+1/2}=0,\\
					&(0,0,0)\leq(i,j,k)\leq (n_x-1,n_y-1,n_z-1),\notag
				\end{aligned}\\
				&\begin{aligned}
					\dfrac{1}{2\mu}\big(Z^{33} - \dfrac{\lambda}{3\lambda + 2\mu}(Z^{11}+Z^{22}+Z^{33}&)\big)_{i+1/2,j+1/2,k+1/2}
					-d_zW^z_{i+1/2,j+1/2,k+1/2}=0,\\
					&(0,0,0)\leq(i,j,k)\leq (n_x-1,n_y-1,n_z-1),\notag
				\end{aligned}\\
				&\begin{aligned}
					Z^{12}_{i,j,k+1/2}=\mu(D_yW^x+D_xW^y)_{i,j,k+1/2},\quad\;\;
					(0,0,0)\leq(i,j,k)\leq (n_x,n_y,n_z-1),\notag
				\end{aligned}\\
				&\begin{aligned}
					Z^{13}_{i,j+1/2,k}=\mu(D_zW^x+D_xW^z)_{i,j+1/2,k},\quad\;\;(0,0,0)\leq(i,j,k)\leq (n_x,n_y-1,n_z),
				\end{aligned}\\
				&\begin{aligned}
					Z^{23}_{i+1/2,j,k}=\mu(D_zW^y+D_yW^z)_{i+1/2,j,k},\quad\;\;
					(0,0,0)\leq(i,j,k)\leq (n_x-1,n_y,n_z),\notag
				\end{aligned}\\
				&\begin{aligned}
					D_xZ^{11}_{i,j+1/2,k+1/2}+d_yZ^{12}_{i,j+1/2,k+1/2}&+d_zZ^{13}_{i,j+1/2,k+1/2}=f^1_{i,j+1/2,k+1/2},\\
					&\;\;(1,0,0)\leq(i,j,k)\leq(n_x-1,n_y-1,n_z-1),\notag
				\end{aligned}\\
				&\begin{aligned}
					D_xZ^{12}_{i+1/2,j,k+1/2}+d_yZ^{22}_{i+1/2,j,k+1/2}&+d_zZ^{23}_{i+1/2,j,k+1/2}=f^2_{i+1/2,j,k+1/2},\\
					&\;\;(0,1,0)\leq(i,j,k)\leq(n_x-1,n_y-1,n_z-1),\notag
				\end{aligned}\\
				&\begin{aligned}
					D_xZ^{13}_{i+1/2,j+1/2,k}+d_yZ^{23}_{i+1/2,j+1/2,k}&+d_zZ^{33}_{i+1/2,j+1/2,k}=f^3_{i+1/2,j+1/2,k},\\
					&\;\;(0,0,1)\leq(i,j,k)\leq(n_x-1,n_y-1,n_z-1).\notag
				\end{aligned}
		\end{align}
		Here the quotient operators $d_z$ and $D_z$ are defined similarly to $\eqref{chafenfuhao}$.
		Like $\eqref{equation4.4}-\eqref{equation4.5}$,for $(0,0,0)\leq(i,j,k)\leq(n_x-1,n_y-1,n_z-1)$,we define 
		\begin{align}
			\delta^{11}_{i+1/2,j+1/2,k+1/2}=(\frac{h^2}{8}\frac{\partial ^2\sigma^{11}}{\partial x^2}+\frac{l^2}{8}\frac{\partial ^2\sigma^{11}}{\partial y^2}+\frac{\chi^2}{8}\frac{\partial ^2\sigma^{11}}{\partial x^2})_{i+1/2,j+1/2,k+1/2},\\
			\delta^{22}_{i+1/2,j+1/2,k+1/2}=(\frac{h^2}{8}\frac{\partial ^2\sigma^{22}}{\partial x^2}+\frac{l^2}{8}\frac{\partial ^2\sigma^{22}}{\partial y^2}+\frac{\chi^2}{8}\frac{\partial ^2\sigma^{22}}{\partial x^2})_{i+1/2,j+1/2,k+1/2},\\
			\delta^{33}_{i+1/2,j+1/2,k+1/2}=(\frac{h^2}{8}\frac{\partial ^2\sigma^{33}}{\partial x^2}+\frac{l^2}{8}\frac{\partial ^2\sigma^{33}}{\partial y^2}+\frac{\chi^2}{8}\frac{\partial ^2\sigma^{33}}{\partial x^2})_{i+1/2,j+1/2,k+1/2}.
		\end{align}
		
		Like $\eqref{equation4.8}$ and $\eqref{equation4.9}$,for suitable $i,j,k$ and we define
		\begin{align}
			\tilde{u}^x_{i,j+1/2,k+1/2}=\left(u^x-\frac{l^2}{8}\frac{\partial^2 u^x}{\partial y^2}-\frac{\chi^2}{8}\frac{\partial^2 u^x}{\partial z^2}\right)_{i,j+1/2,k+1/2},\\
			\tilde{u}^y_{i+1/2,j,k+1/2}=\left(u^y-\frac{h^2}{8}\frac{\partial^2 u^y}{\partial x^2}-\frac{\chi^2}{8}\frac{\partial^2 u^y}{\partial z^2}\right)_{i+1/2,j,k+1/2},\\
			\tilde{u}^z_{i+1/2,j+1/2,k}=\left(u^z-\frac{h^2}{8}\frac{\partial^2 u^z}{\partial x^2}-\frac{l^2}{8}\frac{\partial^2 u^y}{\partial y^2}\right)_{i+1/2,j+1/2,k}.
		\end{align}
		Using them,we can define symbols analogous to those in $\eqref{equation4.1}-\eqref{equation4.3}$:
		\begin{align}
			&\tilde{\sigma}^{11}_{i+1/2,j+1/2,k+1/2}=(\sigma^{11}-\delta^{11})_{i+1/2,j+1/2,k+1/2},\\
			&\tilde{\sigma}^{22}_{i+1/2,j+1/2,k+1/2}=(\sigma^{22}-\delta^{22})_{i+1/2,j+1/2,k+1/2},\\
			&\tilde{\sigma}^{33}_{i+1/2,j+1/2,k+1/2}=(\sigma^{33}-\delta^{33})_{i+1/2,j+1/2,k+1/2},\\
			&\tilde{\sigma}^{12}_{i,j,k}=\sigma^{12}_{i,j,k},\tilde{\sigma}^{13}_{i,j,k}=\sigma^{13}_{i,j,k},\tilde{\sigma}^{23}_{i,j,k}=\sigma^{23}_{i,j,k}.
		\end{align}
		
		Similarly to $Theorem\,\ref{theorem 4.3}$ we can obtain the second order error estimates for $W^x-\tilde{u}^x$,$W^y-\tilde{u}^y$,$W^z-\tilde{u}^z$,
		in discrete $L^2$-norm,and for $Z^{11}-\tilde{\sigma}^{11}$,$Z^{22}-\tilde{\sigma}^{22}$,$Z^{33}-\tilde{\sigma}^{33}$,$Z^{12}-\tilde{\sigma}^{12}$,$Z^{13}-\tilde{\sigma}^{13}$ and $Z^{23}-\tilde{\sigma}^{23}$.The norms are defined analogously to the two-dimensional case.Then we can obtain the superconvergence results similarly to
		$Theorem\,\ref{theorem4.4}$. Because of the size limitation we do not present the results in detail.
		We also include a set of three-dimensional numerical examples to validate the convergence order of the staggered-grid finite difference method in 3D and to further demonstrate its locking-free property.
		
		\begin{remark}
			In the two-dimensional case, our method involves 10 degrees of freedom per element, while in the three-dimensional case it involves 17 degrees of freedom per element, which is comparatively more efficient than \cite{hu2015family} and \cite{man2009lower}.
		\end{remark}
		
		\begin{remark}
			Our approach also could be generalized to the stress formulation of the Stokes equations:
			Let $\boldsymbol{D(u)}$ and $\underline{\sigma}$ denote,respectively,the defoemation tare tensor and the stress tensor:
			\begin{equation*}
				\boldsymbol{D(u)}=\frac{1}{2}(\nabla \boldsymbol{u}+\nabla \boldsymbol{u}^T),\underline{\sigma}(\boldsymbol{u},p)=-p\underline{I}+2\mu \boldsymbol{D(u)}.
			\end{equation*}
			In the free fluid region $\Omega$,($\boldsymbol{u}$,p) satisfy the Stokes equations:
			\begin{equation}
				\begin{aligned}
					-\nabla\cdot \underline{\sigma}(\boldsymbol{u},p)=\boldsymbol{f}\; in \; \Omega,\\
					\nabla\cdot \boldsymbol{u}=g\; in \; \Omega,\\
					\boldsymbol{u}=0\; on\; \Gamma.
				\end{aligned}
			\end{equation}
			And a detailed discussion will be provided in future work.
		\end{remark}
		\section{Numerical examples.}In this section, we present three groups of numerical examples to verify the error estimates and to demonstrate the effectiveness of the finite difference method on staggered grids in eliminating the locking phenomenon. The first two groups are carried out on the two-dimensional domain $\Omega=[0,1]\times[0,1]$. 
		For the 2D cases, the initial partition is a uniform $8\times 8$ mesh, and a sequence of refined meshes is obtained by uniform refinements, i.e., each cell is subdivided equally in both coordinate directions. 
		To further investigate the performance on nonuniform meshes, we generate nonuniform grids by adding random perturbations to the corresponding uniform meshes.The third group is performed on the three-dimensional domain $\Omega=[0,1]\times[0,1]\times[0,1]$. 
		In this 3D case, the initial mesh is a uniform $4\times 4\times 4$ partition, and the mesh is successively refined using the same uniform refinement strategy. 
		Similarly, the associated nonuniform meshes are constructed by introducing random perturbations to the corresponding uniform meshes.

		\subsection{Example 1}:Set that the two Lamé coefficients are $\mu = 1$ and $\lambda = \dfrac{2\nu}{1 - 2\nu}$, where the coefficient takes values $\lambda = 10$ in the compressible elasticity case and $\lambda = 9{,}888{,}888 \approx 10^{7}$, 
		in the incompressible elasticity case. 
		The exact solution $\mathbf{u} = (u^x, u^y)$ is
		\begin{equation}
			\begin{aligned}
				&u^x(x,y)=\sin(2\pi y)\,(-1 + \cos(2\pi x)) + \frac{1}{1 + \lambda}\sin(\pi x)\sin(\pi y),\\
				&u^y(x,y)=\sin(2\pi x)\,(1 - \cos(2\pi y)) + \frac{1}{1 + \lambda}\sin(\pi x)\sin(\pi y).
			\end{aligned}
		\end{equation}
		with Dirichlet boundary condition.The stress term $\underline{\sigma}$ and source term $\boldsymbol{f}$ can be obtained by $\eqref{initial equations}$.
		
		\textbf{For the compressible elasticity case} ($\lambda=10$), we present the errors and convergence rate of displacement on the non-uniform and uniform grids, respectively. The error and convergence rate for displacement and stress are listed in Table $\ref{table6.1}-\ref{table6.2}$ on non-uniform grids and in Table $\ref{table6.3}-\ref{table6.4}$ on uniform grids. 
		
		\textbf{For the nearly incompressible elasticity case} ($\lambda=10^7$), the numerical results are listed in Table $\ref{table6.5}-\ref{table6.8}$

		\begin{table}[htpb]
			\caption{Error and convergence rates of Example 1 (nonuniform mesh,$\lambda=10,\mu=1$).}
			\centering
			\begin{tabular}{|c|c|c|c|c|}
				\hline
				$n_x\times n_y$ & $\norm{W^x-u^x}_{TM}$ & Rate & $\norm{W^y-u^y}_{MT}$ & Rate \\
				\hline
				8$\times$ 8 & 3.004E-04 & --- & 6.819E-04 & ---   \\
				\hline
				16$\times$ 16 & 7.566E-05 & 1.989 & 1.778E-04 & 1.939  \\
				\hline
				32$\times$ 32 & 1.893E-05 & 1.999 & 4.468E-05 & 1.993   \\
				\hline
				64$\times$ 64 & 4.791E-06 & 1.983 & 1.118E-05 & 1.999   \\
				\hline
				128$\times$ 128 & 1.200E-06 & 1.997 & 2.794E-06 & 2.001   \\
				\hline
			\end{tabular}
			\label{table6.1}
		\end{table}
	
		\begin{table}[htpb]
			\caption{Error and convergence rates of Example 1 (nonuniform mesh,$\lambda=10,\mu=1$).}
			\centering
			\begin{tabular}{|c|c|c|c|c|c|c|}
				\hline
				$n_x\times n_y$ & $\norm{Z^{11}-\sigma^{11}}_{M}$ & Rate & $\norm{Z^{12}-\sigma^{12}}_{T}$ & Rate & $\norm{Z^{22}-\sigma^{22}}_{M}$ & Rate \\
				\hline
				8$\times$ 8 & 1.040E-02 & --- & 1.554E-03 & --- & 1.205E-02 & --- \\
				\hline
				16$\times$ 16 & 2.731E-03 & 1.930 & 3.588E-04 & 2.1147 & 3.209E-03 & 1.909  \\
				\hline
				32$\times$ 32 & 6.999E-04 & 1.965 & 8.589E-05 & 2.0627 & 8.231E-04 & 1.963  \\
				\hline
				64$\times$ 64 & 1.753E-04 & 1.997 & 2.126E-05 & 2.0139 & 2.059E-04 & 1.999  \\
				\hline
				128$\times$ 128 & 4.372E-05 & 2.003 & 5.294E-06 & 2.006 & 5.138E-05 & 2.003  \\
				\hline
			\end{tabular}
			\label{table6.2}
		\end{table}
		
		\begin{table}[htpb]
			\caption{Error and convergence rates of Example 1 (uniform mesh,$\lambda=10,\mu=1$).}
			\centering
			\begin{tabular}{|c|c|c|c|c|}
				\hline
				$n_x\times n_y$ & $\norm{W^x-u^x}_{TM}$ & Rate & $\norm{W^y-u^y}_{MT}$ & Rate \\
				\hline
				8$\times$ 8 & 2.879E-04 & --- & 7.068E-04 & ---   \\
				\hline
				16$\times$ 16 & 7.491E-05 & 1.943 & 1.765E-04 & 2.001   \\
				\hline
				32$\times$ 32 & 1.894E-05 & 1.983 & 4.414E-05 & 2.000   \\
				\hline
				64$\times$ 64 & 4.750E-06 & 1.996 & 1.103E-05 & 2.000   \\
				\hline
				128$\times$ 128 & 1.188E-06 & 1.999 & 2.758E-06 & 2.000   \\
				\hline
			\end{tabular}
			\label{table6.3}
		\end{table}
		
		\begin{table}[htpb]
			\caption{Error and convergence rates of Example 1 (uniform mesh,$\lambda=10,\mu=1$).}
			\centering
			\begin{tabular}{|c|c|c|c|c|c|c|}
				\hline
				$n_x\times n_y$ & $\norm{Z^{11}-\sigma^{11}}_{M}$ & Rate & $\norm{Z^{12}-\sigma^{12}}_{T}$ & Rate & $\norm{Z^{22}-\sigma^{22}}_{M}$ & Rate \\
				\hline
				8$\times$ 8 & 1.044E-02 & --- & 1.438E-03 & --- & 1.234E-02 & --- \\
				\hline
				16$\times$ 16 & 2.619E-03 & 1.996 & 3.463E-04 & 2.055 & 3.079E-03 & 2.002  \\
				\hline
				32$\times$ 32 & 6.557E-04 & 1.998 & 8.497E-05 & 2.027 & 7.698E-04 & 2.000  \\
				\hline
				64$\times$ 64 & 1.640E-04 & 1.999 & 2.111E-05 & 2.009 & 1.924E-04 & 2.000  \\
				\hline
				128$\times$ 128 & 4.100E-05 & 1.9998 & 5.270E-06 & 2.002 & 4.811E-05 & 2.000  \\
				\hline
			\end{tabular}
			\label{table6.4}
		\end{table}

		\begin{table}[htpb]
			\caption{Error and convergence rates of Example 1 (nonuniform mesh,$\lambda=10^7,\mu=1$).}
			\centering
			\begin{tabular}{|c|c|c|c|c|}
				\hline
				$n_x\times n_y$ & $\norm{W^x-u^x}_{TM}$ & Rate & $\norm{W^y-u^y}_{MT}$ & Rate \\
				\hline
				8$\times$ 8 & 2.423E-04 & --- & 2.478E-04 & ---   \\
				\hline
				16$\times$ 16 & 6.366E-05 & 1.926 & 6.385E-05 & 1.957   \\
				\hline
				32$\times$ 32 & 1.699E-05 & 1.906 & 1.699E-05 & 1.909   \\
				\hline
				64$\times$ 64 & 4.124E-06 & 2.042 & 4.122E-06 & 2.044   \\
				\hline
				128$\times$ 128 & 1.044E-06 & 1.982 & 1.043E-06 & 1.982   \\
				\hline
			\end{tabular}
			\label{table6.5}
		\end{table}
		
		\begin{table}[htpb]
			\caption{Error and convergence rates of Example 1 (nonuniform mesh,$\lambda=10^7,\mu=1$).}
			\centering
			\begin{tabular}{|c|c|c|c|c|c|c|}
				\hline
				$n_x\times n_y$ & $\norm{Z^{11}-\sigma^{11}}_{M}$ & Rate & $\norm{Z^{12}-\sigma^{12}}_{T}$ & Rate & $\norm{Z^{22}-\sigma^{22}}_{M}$ & Rate \\
				\hline
				8$\times$ 8 & 1.121E-02 & --- & 1.116E-03 & --- & 1.094E-02 & --- \\
				\hline
				16$\times$ 16 & 2.723E-03 & 2.041 & 2.091E-04 & 2.4159 & 2.721E-03 & 2.008  \\
				\hline
				32$\times$ 32 & 6.919E-04 & 1.977 & 4.520E-05 & 2.2102 & 6.880E-04 & 1.984  \\
				\hline
				64$\times$ 64 & 1.689E-04 & 2.034 & 1.110E-05 & 2.0252 & 1.682E-04 & 2.032  \\
				\hline
				128$\times$ 128 & 4.358E-05 & 1.955 & 2.734E-06 & 2.022 & 4.341E-05 & 1.955  \\
				\hline
			\end{tabular}
			\label{table6.6}
		\end{table}
		
		\begin{table}[htpb]
			\caption{Error and convergence rates of Example 1 (uniform mesh,$\lambda=10^7,\mu=1$).}
			\centering
			\begin{tabular}{|c|c|c|c|c|}
				\hline
				$n_x\times n_y$ & $\norm{W^x-u^x}_{TM}$ & Rate & $\norm{W^y-u^y}_{MT}$ & Rate \\
				\hline
				8$\times$ 8 & 2.505E-04 & --- & 2.505E-04 & ---   \\
				\hline
				16$\times$ 16 & 6.503E-05 & 1.946 & 6.503E-05 & 1.946   \\
				\hline
				32$\times$ 32 & 1.644E-05 & 1.984 & 1.644E-05 & 1.984   \\
				\hline
				64$\times$ 64 & 4.122E-06 & 1.996 & 4.123E-06 & 1.996   \\
				\hline
				128$\times$ 128 & 1.031E-06 & 1.999 & 1.031E-06 & 1.999   \\
				\hline
			\end{tabular}
			\label{table6.7}
		\end{table}
		
		\begin{table}[htpb]
			\caption{Error and convergence rates of Example 1 (uniform mesh,$\lambda=10^7,\mu=1$).}
			\centering
			\begin{tabular}{|c|c|c|c|c|c|c|}
				\hline
				$n_x\times n_y$ & $\norm{Z^{11}-\sigma^{11}}_{M}$ & Rate & $\norm{Z^{12}-\sigma^{12}}_{T}$ & Rate & $\norm{Z^{22}-\sigma^{22}}_{M}$ & Rate \\
				\hline
				8$\times$ 8 & 1.038E-02 & --- & 8.345E-04 & --- & 1.035E-02 & --- \\
				\hline
				16$\times$ 16 & 2.597E-03 & 1.999 & 1.896E-04 & 2.137 & 2.587E-03 & 2.000  \\
				\hline
				32$\times$ 32 & 6.497E-04 & 1.999 & 4.461E-05 & 2.088 & 6.471E-04 & 1.999  \\
				\hline
				64$\times$ 64 & 1.624E-04 & 2.000 & 1.091E-05 & 2.030 & 1.618E-04 & 2.000  \\
				\hline
				128$\times$ 128 & 4.062E-05 & 2.000 & 2.713E-06 & 2.008 & 4.045E-05 & 2.000  \\
				\hline
			\end{tabular}
			\label{table6.8}
		\end{table}
		
		\FloatBarrier
		
		\subsection{Example 2}:Assume that $\mu=0.5$,the analytical solution is as follows, and the right-hand side of the
		equations are computed according to the analytic solution:
		\begin{equation}
			\begin{aligned}
				u^{x}(x,y) &= x^{2}(1-x)^{2}y(1-y)(1-2y)
				+ \lambda^{-1} e^{(x-y)}x(1-x)y(1-y), \\[4pt]
				u^{y}(x,y) &= -x(1-x)(1-2x)y^{2}(1-y)^{2}
				+ \lambda^{-1}\sin(\pi x)\sin(\pi y).
			\end{aligned}
		\end{equation}
		where $\lambda$ is the Lamé coefficient.
		
		\textbf{For the compressible elasticity case} ($\lambda=10$), we present the errors and convergence rate of displacement on the non-uniform and uniform grids, respectively. The error and convergence rate for displacement on non-uniform grid and on uniform grid are listed in Table $\ref{table6.9}-\ref{table6.10}$ and $\ref{table6.11}-\ref{table6.12}$,respectively. 
		
		\textbf{For the nearly incompressible elasticity case} ($\lambda=10^7$), we present the errors and convergence rates of displacement on
		non-uniform and uniform grids in Tables	$\ref{table6.13}-\ref{table6.14}$ and $\ref{table6.15}-\ref{table6.16}$, respectively.
		\begin{table}[htpb]
			\caption{Error and convergence rates of Example 2 (non-uniform mesh,$\lambda=10,\mu=1$).}
			\centering
			\begin{tabular}{|c|c|c|c|c|}
				\hline
				$n_x\times n_y$ & $\norm{W^x-u^x}_{TM}$ & Rate & $\norm{W^y-u^y}_{MT}$ & Rate \\
				\hline
				8$\times$ 8 & 3.002E-04 & --- & 6.822E-04 & ---   \\
				\hline
				16$\times$ 16 & 7.727E-05 & 1.958 & 1.766E-04 & 1.950   \\
				\hline
				32$\times$ 32 & 1.913E-05 & 2.013 & 4.466E-05 & 1.983   \\
				\hline
				64$\times$ 64 & 4.788E-06 & 1.999 & 1.111E-05 & 2.006   \\
				\hline
				128$\times$ 128 & 1.201E-06 & 1.995 & 2.793E-06 & 1.993   \\
				\hline
			\end{tabular}
			\label{table6.9}
		\end{table}
		
		\begin{table}[htpb]
			\caption{Error and convergence rates of Example 2 (non-uniform mesh,$\lambda=10,\mu=1$).}
			\centering
			\begin{tabular}{|c|c|c|c|c|c|c|}
				\hline
				$n_x\times n_y$ & $\norm{Z^{11}-\sigma^{11}}_{M}$ & Rate & $\norm{Z^{12}-\sigma^{12}}_{T}$ & Rate & $\norm{Z^{22}-\sigma^{22}}_{M}$ & Rate \\
				\hline
				8$\times$ 8 & 1.037E-02 & --- & 1.406E-03 & --- & 1.202E-02 & --- \\
				\hline
				16$\times$ 16 & 2.776E-03 & 1.902 & 3.523E-04 & 1.997 & 3.226E-03 & 1.898  \\
				\hline
				32$\times$ 32 & 6.928E-04 & 2.002 & 8.661E-05 & 2.024 & 8.133E-04 & 1.988  \\
				\hline
				64$\times$ 64 & 1.727E-04 & 2.003 & 2.127E-05 & 2.025 & 2.028E-04 & 2.003  \\
				\hline
				128$\times$ 128 & 4.358E-05 & 1.987 & 5.302E-06 & 2.004 & 5.120E-05 & 1.986  \\
				\hline
			\end{tabular}
			\label{table6.10}
		\end{table}
		
	    \begin{table}[htpb]
	    	\caption{Error and convergence rates of Example 2 (uniform mesh,$\lambda=10,\mu=1$).}
	    	\centering
	    	\begin{tabular}{|c|c|c|c|c|}
	    		\hline
	    		$n_x\times n_y$ & $\norm{W^x-u^x}_{TM}$ & Rate & $\norm{W^y-u^y}_{MT}$ & Rate \\
	    		\hline
	    		8$\times$ 8 & 2.879E-04 & --- & 7.068E-04 & ---   \\
	    		\hline
	    		16$\times$ 16 & 7.491E-05 & 1.942 & 1.765E-04 & 2.001  \\
	    		\hline
	    		32$\times$ 32 & 1.894E-05 & 1.983 & 4.414E-05 & 2.000   \\
	    		\hline
	    		64$\times$ 64 & 4.750E-06 & 1.996 & 1.103E-05 & 2.000   \\
	    		\hline
	    		128$\times$ 128 & 1.188E-06 & 1.999 & 2.758E-06 & 2.000   \\
	    		\hline
	    	\end{tabular}
	    	\label{table6.11}
	    \end{table}
	    
	    \begin{table}[htpb]
	    	\caption{Error and convergence rates of Example 2 (uniform mesh,$\lambda=10,\mu=1$).}
	    	\centering
	    	\begin{tabular}{|c|c|c|c|c|c|c|}
	    		\hline
	    		$n_x\times n_y$ & $\norm{Z^{11}-\sigma^{11}}_{M}$ & Rate & $\norm{Z^{12}-\sigma^{12}}_{T}$ & Rate & $\norm{Z^{22}-\sigma^{22}}_{M}$ & Rate \\
	    		\hline
	    		8$\times$ 8 & 1.044E-02 & --- & 1.438E-03 & --- & 1.234E-02 & --- \\
	    		\hline
	    		16$\times$ 16 & 2.619E-03 & 1.996 & 3.463E-04 & 2.054 & 3.079E-03 & 2.002  \\
	    		\hline
	    		32$\times$ 32 & 6.557E-04 & 1.998 & 8.497E-05 & 2.027 & 7.698E-04 & 2.000  \\
	    		\hline
	    		64$\times$ 64 & 1.640E-04 & 1.999 & 2.111E-05 & 2.008 & 1.924E-04 & 2.000  \\
	    		\hline
	    		128$\times$ 128 & 4.100E-05 & 2.000 & 5.270E-06 & 2.002 & 4.811E-05 & 2.000  \\
	    		\hline
	    	\end{tabular}
	    	\label{table6.12}
	    \end{table}
	    
	    \begin{table}[htpb]
	    	\caption{Error and convergence rates of Example 2 (non-uniform mesh,$\lambda=10^7,\mu=1$).}
	    	\centering
	    	\begin{tabular}{|c|c|c|c|c|}
	    		\hline
	    		$n_x\times n_y$ & $\norm{W^x-u^x}_{TM}$ & Rate & $\norm{W^y-u^y}_{MT}$ & Rate \\
	    		\hline
	    		8$\times$ 8 & 2.326E-04 & --- & 2.383E-04 & ---   \\
	    		\hline
	    		16$\times$ 16 & 6.381E-05 & 1.866 & 6.392E-05 & 1.899   \\
	    		\hline
	    		32$\times$ 32 & 1.680E-05 & 1.925 & 1.682E-05 & 1.926   \\
	    		\hline
	    		64$\times$ 64 & 4.155E-06 & 2.016 & 4.160E-06 & 2.015   \\
	    		\hline
	    		128$\times$ 128 & 1.037E-06 & 2.002 & 1.037E-06 & 2.004   \\
	    		\hline
	    	\end{tabular}
	    	\label{table6.13}
	    \end{table}
	    
	    \begin{table}[htpb]
	    	\caption{Error and convergence rates of Example 2 (non-uniform mesh,$\lambda=10^7,\mu=1$).}
	    	\centering
	    	\begin{tabular}{|c|c|c|c|c|c|c|}
	    		\hline
	    		$n_x\times n_y$ & $\norm{Z^{11}-\sigma^{11}}_{M}$ & Rate & $\norm{Z^{12}-\sigma^{12}}_{T}$ & Rate & $\norm{Z^{22}-\sigma^{22}}_{M}$ & Rate \\
	    		\hline
	    		8$\times$ 8 & 1.093E-02 & --- & 1.114E-03 & --- & 1.077E-02 & --- \\
	    		\hline
	    		16$\times$ 16 & 2.651E-03 & 2.043 & 2.069E-04 & 2.429 & 2.637E-03 & 2.030  \\
	    		\hline
	    		32$\times$ 32 & 6.910E-04 & 1.940 & 4.710E-05 & 2.135 & 6.891E-04 & 1.936  \\
	    		\hline
	    		64$\times$ 64 & 1.734E-04 & 1.994 & 1.114E-05 & 2.079 & 1.728E-04 & 1.995  \\
	    		\hline
	    		128$\times$ 128 & 4.319E-05 & 2.005 & 2.753E-06 & 2.017 & 4.302E-05 & 2.006  \\
	    		\hline
	    	\end{tabular}
	    	\label{table6.14}
	    \end{table}
	    
	    \begin{table}[htpb]
	    	\caption{Error and convergence rates of Example 2 (uniform mesh,$\lambda=10^7,\mu=1$).}
	    	\centering
	    	\begin{tabular}{|c|c|c|c|c|}
	    		\hline
	    		$n_x\times n_y$ & $\norm{W^x-u^x}_{TM}$ & Rate & $\norm{W^y-u^y}_{MT}$ & Rate \\
	    		\hline
	    		8$\times$ 8 & 2.505E-04 & --- & 2.505E-04 & ---   \\
	    		\hline
	    		16$\times$ 16 & 6.503E-05 & 1.946 & 6.503E-05 & 1.946   \\
	    		\hline
	    		32$\times$ 32 & 1.644E-05 & 1.984 & 1.644E-05 & 1.984   \\
	    		\hline
	    		64$\times$ 64 & 4.122E-06 & 1.996 & 4.123E-06 & 1.996   \\
	    		\hline
	    		128$\times$ 128 & 1.031E-06 & 1.999 & 1.031E-06 & 1.999   \\
	    		\hline
	    	\end{tabular}
	    	\label{table6.15}
	    \end{table}

	    \begin{table}[t]
	    	\caption{Error and convergence rates of Example 2 (uniform mesh,$\lambda=10^7,\mu=1$).}
	    	\centering
	    	\begin{tabular}{|c|c|c|c|c|c|c|}
	    		\hline
	    		$n_x\times n_y$ & $\norm{Z^{11}-\sigma^{11}}_{M}$ & Rate & $\norm{Z^{12}-\sigma^{12}}_{T}$ & Rate & $\norm{Z^{22}-\sigma^{22}}_{M}$ & Rate \\
	    		\hline
	    		8$\times$ 8 & 1.038E-02 & --- & 8.345E-04 & --- & 1.035E-02 & --- \\
	    		\hline
	    		16$\times$ 16 & 2.597E-03 & 1.999 & 1.896E-04 & 2.137 & 2.587E-03 & 2.000  \\
	    		\hline
	    		32$\times$ 32 & 6.497E-04 & 1.999 & 4.461E-05 & 2.087 & 6.471E-04 & 1.999  \\
	    		\hline
	    		64$\times$ 64 & 1.628E-04 & 2.000 & 1.091E-05 & 2.030 & 1.618E-04 & 2.000  \\
	    		\hline
	    		128$\times$ 128 & 4.062E-05 & 2.000 & 2.713E-06 & 2.008 & 4.045E-05 & 2.000  \\
	    		\hline
	    	\end{tabular}
	    	\label{table6.16}
	    \end{table}
	    
	    \FloatBarrier
	
		\subsection{Example 3}:Assume that $\mu=1$,the analytical solution is as follows, and the right-hand side of the
		equations are computed according to the analytic solution:
		\begin{equation}
			\left\{
			\begin{aligned}
				u_x &= 9\pi^{2}\left(1+\frac{1}{\lambda}\right)
				\sin^{3}(\pi x)\,\cos(\pi y)\cos(\pi z)\,\sin^{2}(\pi y)\sin^{2}(\pi z),\\[4pt]
				u_y &= 9\pi^{2}\left(1+\frac{1}{\lambda}\right)
				\sin^{3}(\pi y)\,\cos(\pi x)\cos(\pi z)\,\sin^{2}(\pi x)\sin^{2}(\pi z),\\[4pt]
				u_z &= -18\pi^{2}\left(1+\frac{1}{\lambda}\right)
				\sin^{3}(\pi z)\,\cos(\pi x)\cos(\pi y)\,\sin^{2}(\pi x)\sin^{2}(\pi y).
			\end{aligned}
			\right.
		\end{equation}
		where $\lambda$ is the Lamé coefficient.
		
		\textbf{For the compressible elasticity case} ($\lambda=10$), we present the errors and convergence rate of displacement on the non-uniform and uniform grids, respectively. The error and convergence rate for displacement on non-uniform grid and on uniform grid are listed in Table $\ref{table6.23}-\ref{table6.25}$ and $\ref{table6.17}-\ref{table6.19}$,respectively. 
		
		\textbf{For the nearly incompressible elasticity case} ($\lambda=10^7$), we present the errors and convergence rates of displacement on
		non-uniform and uniform grids in Tables	 and $\ref{table6.26}-\ref{table6.28}$  and $\ref{table6.20}-\ref{table6.22}$, respectively.
		
			\begin{table}[htpb]
			\caption{Error and convergence rates of Example 3 (non-uniform mesh,$\lambda=10,\mu=1$).}
			\centering
			\begin{tabular}{|c|c|c|c|c|c|c|}
				\hline
				$n_x\times n_y \times n_z$ & $\norm{W^{x}-u^{x}}_{TMM}$ & Rate & $\norm{W^{y}-u^{y}}_{MTM}$ & Rate & $\norm{W^{z}-u^{z}}_{MMT}$ & Rate \\
				\hline
				8$\times$ 8$\times$ 8 & 9.81E-02 & --- & 1.85E-01 & --- & 9.94E-02 & --- \\
				\hline
				16$\times$ 16$\times$ 16 & 2.47E-02 & 2.22 & 4.38E-02 & 1.99 & 2.12E-02 & 2.08  \\
				\hline
				32$\times$ 32$\times$ 32 & 6.17E-03 & 2.00 & 1.09E-02 & 2.00 & 5.31E-03 & 2.00  \\
				\hline
				64$\times$ 64$\times$ 64 & 1.53E-03 & 2.02 & 2.73E-03 & 2.00 & 1.30E-03 & 2.00  \\
				\hline
			\end{tabular}
			\label{table6.23}
		\end{table}
		\begin{table}[htpb]
			\caption{Error and convergence rates of Example 3 (non-uniform mesh,$\lambda=10,\mu=1$).}
			\centering
			\begin{tabular}{|c|c|c|c|c|c|c|}
				\hline
				$n_x\times n_y \times n_z$ & $\norm{Z^{11}-\sigma^{11}}_{M}$ & Rate & $\norm{Z^{22}-\sigma^{22}}_{M}$ & Rate & $\norm{Z^{33}-\sigma^{33}}_{M}$ & Rate \\
				\hline
				8$\times$ 8$\times$ 8 & 1.53E+00 & --- & 2.02E+00 & --- & 1.64E+00 & 1.84  \\
				\hline
				16$\times$ 16$\times$ 16 & 4.35E-01 & 1.98 & 5.22E-01 & 1.82 & 4.153E-01 & 1.96  \\
				\hline
				32$\times$ 32$\times$ 32 & 1.11E-01 & 1.97 & 1.33E-01 & 1.97 & 1.062E-01 & 1.97  \\
				\hline
				64$\times$ 64$\times$ 64 & 2.77E-02 & 2.02 & 3.30E-02 & 2.00 & 2.614E-02 & 2.01  \\
				\hline
			\end{tabular}
			\label{table6.24}
		\end{table}
		\begin{table}[htpb]
			\caption{Error and convergence rates of Example 3 (non-uniform mesh,$\lambda=10,\mu=1$).}
			\centering
			\begin{tabular}{|c|c|c|c|c|c|c|}
				\hline
				$n_x\times n_y \times n_z$ & $\norm{Z^{12}-\sigma^{12}}_{TTM}$ & Rate & $\norm{Z^{13}-\sigma^{13}}_{TMT}$ & Rate & $\norm{Z^{23}-\sigma^{23}}_{MTT}$ & Rate \\
				\hline
				8$\times$ 8$\times$ 8 & 9.97E-01 & --- & 1.52E+00 & --- & 1.91E+00 & ---  \\
				\hline
				16$\times$ 16$\times$ 16 & 2.21E-01 & 2.21 & 3.81E-01 & 2.17 & 4.11E-01 & 2.00  \\
				\hline
				32$\times$ 32$\times$ 32 & 5.53E-02 & 2.00 & 9.57E-02 & 2.00 & 1.02E-01 & 2.00  \\
				\hline
				64$\times$ 64$\times$ 64 & 1.36E-02 & 2.02 & 2.35E-02 & 2.02 & 2.51E-02 & 2.02  \\
				\hline
			\end{tabular}
			\label{table6.25}
		\end{table}
		
		\begin{table}[htpb]
			\caption{Error and convergence rates of Example 3 (uniform mesh,$\lambda=10,\mu=1$).}
			\centering
			\begin{tabular}{|c|c|c|c|c|c|c|}
				\hline
				$n_x\times n_y \times n_z$ & $\norm{W^{x}-u^{x}}_{TMM}$ & Rate & $\norm{W^{y}-u^{y}}_{MTM}$ & Rate & $\norm{W^{z}-u^{z}}_{MMT}$ & Rate \\
				\hline
				8$\times$ 8$\times$ 8 & 9.86E-02 & --- & 1.836E-01 & --- & 9.730E-02 & ---  \\
				\hline
				16$\times$ 16$\times$ 16 & 2.24E-02 & 2.14 & 4.226E-02 & 2.13 & 2.19E-02 & 2.11  \\
				\hline
				32$\times$ 32$\times$ 32 & 5.64E-03 & 1.99 & 1.069E-02 & 1.99 & 5.51E-03 & 1.98  \\
				\hline
				64$\times$ 64$\times$ 64 & 1.39E-03 & 2.02 & 2.670E-03 & 2.02 & 1.35E-03 & 2.00  \\
				\hline
			\end{tabular}
			\label{table6.17}
		\end{table}
		\begin{table}[htpb]
			\caption{Error and convergence rates of Example 3 (uniform mesh,$\lambda=10,\mu=1$).}
			\centering
			\begin{tabular}{|c|c|c|c|c|c|c|}
				\hline
				$n_x\times n_y \times n_z$ & $\norm{Z^{11}-\sigma^{11}}_{M}$ & Rate & $\norm{Z^{22}-\sigma^{22}}_{M}$ & Rate & $\norm{Z^{33}-\sigma^{33}}_{M}$ & Rate \\
				\hline
				8$\times$ 8$\times$ 8 & 1.55E+00 & --- & 2.02E+00 & --- & 1.65E+00 & ---  \\
				\hline
				16$\times$ 16$\times$ 16 & 3.63E-01 & 1.88 & 5.60E-01 & 2.09 & 4.49E-01 & 1.85  \\
				\hline
				32$\times$ 32$\times$ 32 & 9.25E-02 & 2.00 & 1.43E-01 & 1.97 & 1.11E-01 & 1.97  \\
				\hline
				64$\times$ 64$\times$ 64 & 2.26E-02 & 2.02 & 3.51E-02 & 2.03 & 2.74E-02 & 2.02  \\
				\hline
			\end{tabular}
			\label{table6.18}
		\end{table}
		\begin{table}[htpb]
			\caption{Error and convergence rates of Example 3 (uniform mesh,$\lambda=10,\mu=1$).}
			\centering
			\begin{tabular}{|c|c|c|c|c|c|c|}
				\hline
				$n_x\times n_y \times n_z$ & $\norm{Z^{12}-\sigma^{12}}_{TTM}$ & Rate & $\norm{Z^{13}-\sigma^{13}}_{TMT}$ & Rate & $\norm{Z^{23}-\sigma^{23}}_{MTT}$ & Rate \\
				\hline
				8$\times$ 8$\times$ 8 & 9.95E-01 & --- & 1.49E+00 & --- & 1.93E+00 & ---  \\
				\hline
				16$\times$ 16$\times$ 16 & 2.47E-01 & 2.13 & 3.14E-01 & 2.00 & 4.41E-01 & 2.24  \\
				\hline
				32$\times$ 32$\times$ 32 & 6.32E-02 & 1.97 & 8.02E-02 & 1.96 & 1.12E-01 & 1.97  \\
				\hline
				64$\times$ 64$\times$ 64 & 1.57E-02 & 2.03 & 1.96E-02 & 2.00 & 2.75E-02 & 2.03  \\
				\hline
			\end{tabular}
			\label{table6.19}
		\end{table}

		\begin{table}[htpb]
			\caption{Error and convergence rates of Example 3 (non-uniform mesh,$\lambda=10^{7},\mu=1$).}
			\centering
			\begin{tabular}{|c|c|c|c|c|c|c|}
				\hline
				$n_x\times n_y \times n_z$ & $\norm{W^{x}-u^{x}}_{TMM}$ & Rate & $\norm{W^{y}-u^{y}}_{MTM}$ & Rate & $\norm{W^{z}-u^{z}}_{MMT}$ & Rate \\
				\hline
				8$\times$ 8$\times$ 8 & 1.02E-01 & --- & 1.87E-01 & --- & 9.96E-02 & ---  \\
				\hline
				16$\times$ 16$\times$ 16 & 2.52E-02 & 2.03 & 4.44E-02 & 2.02 & 2.43E-02 & 2.07  \\
				\hline
				32$\times$ 32$\times$ 32 & 6.44E-03 & 1.97 & 1.13E-02 & 1.97 & 6.19E-03 & 1.97  \\
				\hline
				64$\times$ 64$\times$ 64 & 1.57E-03 & 2.00 & 2.82E-03 & 2.03 & 1.54E-03 & 2.00  \\
				\hline
			\end{tabular}
			\label{table6.26}
		\end{table}

		\begin{table}[htpb]
			\caption{Error and convergence rates of Example 3 (non-uniform mesh, $\lambda=10^{7}$, $\mu=1$).}
			\centering
			\begin{tabular}{|c|c|c|c|c|c|c|}
				\hline
				$n_x \times n_y \times n_z$ 
				& $\|\sigma^{11}-Z^{11}\|_{M}$ & Rate 
				& $\|\sigma^{22}-Z^{22}\|_{M}$ & Rate 
				& $\|\sigma^{33}-Z^{33}\|_{M}$ & Rate \\
				\hline
				$8\times 8\times 8$ 
				& $1.61\mathrm{E}{+00}$ & --- 
				& $2.08\mathrm{E}{+00}$ & --- 
				& $1.71\mathrm{E}{+00}$ & --- \\
				\hline
				$16\times 16\times 16$ 
				& $3.75\mathrm{E}{-01}$ & 2.04 
				& $5.25\mathrm{E}{-01}$ & 2.10 
				& $4.17\mathrm{E}{-01}$ & 1.98 \\
				\hline
				$32\times 32\times 32$ 
				& $9.33\mathrm{E}{-02}$ & 2.00 
				& $1.32\mathrm{E}{-01}$ & 2.01 
				& $1.05\mathrm{E}{-01}$ & 1.99 \\
				\hline
				$64\times 64\times 64$ 
				& $2.31\mathrm{E}{-02}$ & 2.00 
				& $3.25\mathrm{E}{-02}$ & 2.01 
				& $2.61\mathrm{E}{-02}$ & 2.02 \\
				\hline
			\end{tabular}
			\label{table6.27}
		\end{table}

		\begin{table}[htpb]
			\caption{Error and convergence rates of Example 3 (non-uniform mesh,$\lambda=10^{7},\mu=1$).}
			\centering
			\begin{tabular}{|c|c|c|c|c|c|c|}
				\hline
				$n_x\times n_y \times n_z$ 
				& $\norm{Z^{12}-\sigma^{12}}_{TTM}$ & Rate 
				& $\norm{Z^{13}-\sigma^{13}}_{TMT}$ & Rate 
				& $\norm{Z^{23}-\sigma^{23}}_{MTT}$ & Rate \\
				\hline
				$8\times 8\times 8$ 
				& $1.02\mathrm{E}{+00}$ & --- 
				& $1.52\mathrm{E}{+00}$ & --- 
				& $2.02\mathrm{E}{+00}$ & --- \\
				\hline
				$16\times 16\times 16$ 
				& $2.42\mathrm{E}{-01}$ & 2.04 
				& $3.66\mathrm{E}{-01}$ & 2.08 
				& $4.91\mathrm{E}{-01}$ & 2.05 \\
				\hline
				$32\times 32\times 32$ 
				& $6.15\mathrm{E}{-02}$ & 1.97 
				& $9.19\mathrm{E}{-02}$ & 1.98 
				& $1.25\mathrm{E}{-01}$ & 1.99 \\
				\hline
				$64\times 64\times 64$ 
				& $1.52\mathrm{E}{-02}$ & 2.04 
				& $2.28\mathrm{E}{-02}$ & 2.01 
				& $3.06\mathrm{E}{-02}$ & 2.01 \\
				\hline
			\end{tabular}
			\label{table6.28}
		\end{table}
			
		\begin{table}[htpb]
			\caption{Error and convergence rates of Example 3 (uniform mesh,$\lambda=10^{7},\mu=1$).}
			\centering
			\begin{tabular}{|c|c|c|c|c|c|c|}
				\hline
				$n_x\times n_y \times n_z$ 
				& $\norm{W^{x}-u^{x}}_{TMM}$ & Rate 
				& $\norm{W^{y}-u^{y}}_{MTM}$ & Rate 
				& $\norm{W^{z}-u^{z}}_{MMT}$ & Rate \\
				\hline
				$8\times 8\times 8$ 
				& $9.35\mathrm{E}{-02}$ & --- 
				& $1.70\mathrm{E}{-01}$ & --- 
				& $9.27\mathrm{E}{-02}$ & --- \\
				\hline
				$16\times 16\times 16$ 
				& $1.92\mathrm{E}{-02}$ & 2.25 
				& $3.64\mathrm{E}{-02}$ & 2.29 
				& $1.95\mathrm{E}{-02}$ & 2.23 \\
				\hline
				$32\times 32\times 32$ 
				& $4.86\mathrm{E}{-03}$ & 1.98 
				& $9.22\mathrm{E}{-03}$ & 1.98 
				& $4.95\mathrm{E}{-03}$ & 1.98 \\
				\hline
				$64\times 64\times 64$ 
				& $1.21\mathrm{E}{-03}$ & 2.02 
				& $2.30\mathrm{E}{-03}$ & 2.01 
				& $1.22\mathrm{E}{-03}$ & 2.00 \\
				\hline
			\end{tabular}
			\label{table6.20}
		\end{table}
		\begin{table}[htpb]
			\caption{Error and convergence rates of Example 3 (uniform mesh,$\lambda=10^{7},\mu=1$).}
			\centering
			\begin{tabular}{|c|c|c|c|c|c|c|}
				\hline
				$n_x\times n_y \times n_z$ 
				& $\norm{Z^{11}-\sigma^{11}}_{M}$ & Rate 
				& $\norm{Z^{22}-\sigma^{22}}_{M}$ & Rate 
				& $\norm{Z^{33}-\sigma^{33}}_{M}$ & Rate \\
				\hline
				$8\times 8\times 8$ 
				& $2.00\mathrm{E}{+00}$ & --- 
				& $2.19\mathrm{E}{+00}$ & --- 
				& $2.10\mathrm{E}{+00}$ & --- \\
				\hline
				$16\times 16\times 16$ 
				& $5.67\mathrm{E}{-01}$ & 1.88 
				& $5.05\mathrm{E}{-01}$ & 1.92 
				& $6.13\mathrm{E}{-01}$ & 2.12 \\
				\hline
				$32\times 32\times 32$ 
				& $1.43\mathrm{E}{-01}$ & 1.99 
				& $1.29\mathrm{E}{-01}$ & 1.99 
				& $1.54\mathrm{E}{-01}$ & 1.97 \\
				\hline
				$64\times 64\times 64$ 
				& $3.57\mathrm{E}{-02}$ & 2.02 
				& $3.21\mathrm{E}{-02}$ & 2.01 
				& $3.78\mathrm{E}{-02}$ & 2.01 \\
				\hline
			\end{tabular}
			\label{table6.21}
		\end{table}
		\begin{table}[H]
			\caption{Error and convergence rates of Example 3 (uniform mesh,$\lambda=10^{7},\mu=1$).}
			\centering
			\begin{tabular}{|c|c|c|c|c|c|c|}
				\hline
				$n_x\times n_y \times n_z$ 
				& $\norm{Z^{12}-\sigma^{12}}_{TTM}$ & Rate 
				& $\norm{Z^{13}-\sigma^{13}}_{TMT}$ & Rate 
				& $\norm{Z^{23}-\sigma^{23}}_{MTT}$ & Rate \\
				\hline
				$8\times 8\times 8$ 
				& $9.69\mathrm{E}{-01}$ & --- 
				& $1.61\mathrm{E}{+00}$ & --- 
				& $2.08\mathrm{E}{+00}$ & --- \\
				\hline
				$16\times 16\times 16$ 
				& $2.20\mathrm{E}{-01}$ & 2.03 
				& $3.76\mathrm{E}{-01}$ & 2.14 
				& $5.10\mathrm{E}{-01}$ & 2.10 \\
				\hline
				$32\times 32\times 32$ 
				& $5.48\mathrm{E}{-02}$ & 1.99 
				& $9.37\mathrm{E}{-02}$ & 2.00 
				& $1.29\mathrm{E}{-01}$ & 2.00 \\
				\hline
				$64\times 64\times 64$ 
				& $1.34\mathrm{E}{-02}$ & 2.00 
				& $2.29\mathrm{E}{-02}$ & 2.03 
				& $3.20\mathrm{E}{-02}$ & 2.04 \\
				\hline
			\end{tabular}
			\label{table6.22}
		\end{table}

		\bibliographystyle{plain}
		\bibliography{Elastity_MAC}
		
\end{document}